\newcommand\az{{\mathcal Z^{\bm v}_p(s;x)}}
\newcommand\azs[1]{{\mathcal Z^{\bm v}_1(#1;x)}}
\newcommand\azbs[1]{{\mathcal Z^{\bm \beta}_1(#1;x)}}
\newcommand\azsp[1]{{\mathcal Z^{\bm v}_p(#1;x)}}
\newcommand\azx[1]{{\mathcal Z^{\bm v}_1\left(s;#1\right)}}
\newcommand\Cl{{\mbox{\,Cl}}}
\theoremstyle{plain}%
  \newtheorem{theorem}{Theorem}
  \newtheorem{corollary}{Corollary}
  \newtheorem{proposition}{Proposition}
  \newtheorem{lemma}{Lemma}%
\theoremstyle{remark}
\theoremstyle{definition}
\newtheorem{definition}{Definition}
\begin{document}
\renewcommand{\theequation}{\arabic{equation}}
\title{Generalized Arakawa-Kaneko zeta functions}
\author{Kwang-Wu Chen\\
\small Department of Mathematics, University of Taipei,\\
\small No. $1$,  Ai-Guo West Road, Taipei $10048$, Taiwan.\\
\small E-mail: kwchen@uTaipei.edu.tw\footnote{This paper was
supported by the Ministry of Science and Technology, R.O.C.
grant under MOST 105-2115-M-845-001.}}

\maketitle

\begin{abstract}  
\setlength{\baselineskip}{14pt}
Let $p,x$ be real numbers, and $s$ be a complex number,  with 
$\Re(s)>1-r$, $p\geq 1$, and $x+1>0$.
The zeta function ${\mathcal Z}^{\bm\alpha}_p(s;x)$ is defined by
$$
{\mathcal Z}^{\bm\alpha}_p(s;x)
=\frac{1}{\Gamma(s)}\int^\infty_0
\frac{e^{-xt}}
{e^t-1}\,Li_{\bm{\alpha}}\left(\frac{1-e^{-t}}p\right)
t^{s-1}\,dt,
$$
where $\bm\alpha=(\alpha_1,\ldots,\alpha_r)$ is a $r$-tuple positive integers, and
$Li_{\bm{\alpha}}(z)$
is the one-variable multiple polylogarithms.
Since ${\cal Z}^{\bm\alpha}_1(s;0)=\xi(\bm\alpha;s)$,
we call this function as a generalized Arakawa-Kaneko zeta function.
In this paper, we investigate the properties and values of ${\cal Z}^{\bm\alpha}_p(s;x)$ 
with different values $s$, $x$, and $p$. We then give some applications on them.
\end{abstract}

\noindent{\small {\it Key Words:} 
Arakawa-Kaneko zeta functions, multiple zeta values, generalized harmonic functions,
modified Bell polynomials.

\noindent{\it Mathematics Subject Classification 2010:}
11M35, 11M41, 33B15.
}

\setlength{\baselineskip}{18pt}
\section{Introduction}
Let $p,x$ be real numbers, and $s$ be a complex number,  with 
$\Re(s)>1-r$, $p\geq 1$, and $x+1>0$.
The zeta function ${\mathcal Z}^{\bm\alpha}_p(s;x)$ is defined by
$$
{\mathcal Z}^{\bm\alpha}_p(s;x)
=\frac{1}{\Gamma(s)}\int^\infty_0
\frac{e^{-xt}}
{e^t-1}\,Li_{\bm{\alpha}}\left(\frac{1-e^{-t}}p\right)
t^{s-1}\,dt,
$$
where $\bm\alpha=(\alpha_1,\ldots,\alpha_r)$ is a $r$-tuple positive integers, and
$$
Li_{\bm{\alpha}}(z)=\sum_{1\leq n_1<n_2<\cdots<n_r}
\frac{z^{n_r}}{n_1^{\alpha_1}n_2^{\alpha_2}\cdots n_r^{\alpha_r}}
$$
is the one-variable multiple polylogarithms.

Let the generalized harmonic function $H_n^{(s)}(z)$ be defined as
$$
H_n^{(s)}(z)=\sum^n_{j=1}\frac 1{(j+z)^s},
$$
where $n\in\mathbb N$; $s\in\mathbb C$; $z\in\mathbb C\backslash\mathbb Z^-$,
$\mathbb Z^-=\{-1,-2,-3,\ldots\}$.
In particular, $H_n^{(s)}(0)=H_n^{(s)}=\sum^n_{k=1}\frac1{k^s}$ is the generalized
harmonic number, and $H_n^{(s)}(-1/2)=\sum^n_{k=1}\frac{2^s}{(2k-1)^s}=2^sO^{(s)}_n$.

The function ${\mathcal Z}^{\bm\alpha}_p(s;x)$ can be regarded as a kind of 
generalization of Arakawa-Kaneko zeta 
function. For example, ${\mathcal Z}^{\bm\alpha}_1(s;0)$ is $\xi({\bm\alpha};s)$. And if
we set $\bm{\alpha}=a$ be a positive integer, then${\mathcal Z}^a_1(s;0)=\xi_a(s)$,
where $\xi_a(s)$ is the original Arakawa-Kaneko zeta function \cite{AK}.
Moreover, 
$$
{\mathcal Z}^a_1(s;-1/2)=2^s\alpha_a(s),
\quad\mbox{and}\quad
{\mathcal Z}^a_2(s;-1/2)=2^s\beta_a(s),
$$
where $\alpha_a(s)$ and $\beta_a(s)$ are appeared
in \cite{CC1} which are defined by Coppo and Candelpergher. 
The special values $\alpha_a(s)$ and $\beta_a(s)$ can be expressed by means
of certain inverse binomial series sutdied by Kalmykov and Davydychev
in relation to the Feynman diagrams \cite{CC1, DK}.
There are a lot related works and generalizations in \cite{AK, Chen, CC, CC1, Ima, KT, Kub}.

In this paper, we evaluate ${\cal Z}_p^{\bm\alpha}(s;x)$ at $s=m+1$,
\begin{equation}\label{eq.11} 
{\mathcal Z}^{\bm\alpha}_p(m+1;x)=
\sum_{1\leq n_1<n_2<\cdots<n_r}
B\left(n_r,1+x\right)\frac{
P_m\left(H_{n_r}^{(1)}\left(x\right),
\ldots,H_{n_r}^{(m)}\left(x\right)\right)}
{p^{n_r}n_1^{\alpha_1}n_2^{\alpha_2}\cdots n_r^{\alpha_r}},
\end{equation}
where $m\in\mathbb N_0$, $\mathbb N_0=\{0,1,2,\ldots\}$,
$B(x,y)$ is the Euler beta function,
and $P_m(x_1,x_2,\ldots,x_m)$ is the modified Bell polynomial
defined by \cite{Chen, CC, CC1}
$$
\exp\left(\sum^\infty_{k=1}\frac{x_k}{k}z^k\right)=
\sum^\infty_{m=0}P_m(x_1,x_2,\ldots,x_m)z^m.
$$

For positive integers $\alpha_1,\alpha_2,\ldots,\alpha_q$, 
a multiple zeta value or $q$-fold Euler sums
of depth $q$ and weight $\varpi=\alpha_1+\alpha_2+\cdots+\alpha_q+1$
is defined as
$$
\zeta(\alpha_1,\alpha_2,\ldots,\alpha_{q-1},\alpha_q+1)
=\sum_{1\leq n_1<n_2<\cdots<n_q}
n_1^{-\alpha_1}n_2^{-\alpha_2}\cdots n_{q-1}^{-\alpha_{q-1}}n_q^{-\alpha_q-1}.
$$
For our convenience, we let $\{a\}^k$ be $k$ repetitions of 
$a$, for example $\zeta(1,\{2\}^2,4)=\zeta(1,2,2,4)$. 
Let ($a_1$, $b_1$), ($a_2$, $b_2$), $\ldots$, ($a_m$, $b_m$) 
be $m$ pairs of nonnegative integers. If we write 
$$
(\alpha_1,\alpha_2,\ldots,\alpha_q+1)
=(\{1\}^{a_1}, b_1+2, \{1\}^{a_2}, b_2+2,\ldots,\{1\}^{a_m},b_m+2)
$$
and set 
$$
(\{1\}^{b_m}, a_m+2, \{1\}^{b_{m-1}}, a_{m-1}+2,\ldots,\{1\}^{b_1},a_1+2)
=(\beta_1,\beta_2,\ldots,\beta_r+1),
$$
then the duality theorem of multiple zeta values \cite{Ohn} is stated as
$$
\zeta(\alpha_1,\alpha_2,\ldots,\alpha_q+1)
=\zeta(\beta_1,\beta_2,\ldots,\beta_r+1).
$$
Here we say that $\zeta(\beta_1,\beta_2,\ldots,\beta_r+1)$
is the dual of $\zeta(\alpha_1,\alpha_2,\ldots,\alpha_q+1)$.

This duality theorem can be used to get the special values of ${\mathcal Z}^{\bm\beta}_1(m+1;x)$.
That is to say,  if $p=1$, then the value ${\mathcal Z}^{\bm\beta}_1(m+1;x)$
have another expression:
\begin{equation}\label{eq.12} 
\azbs{m+1}=
\sum_{|\bm{d}|=m}M_q(\bm\alpha,\bm d)
\zeta(\alpha_1+d_1, \ldots, \alpha_{q-1}+d_{q-1}, 
\alpha_{q}+d_{q}+d_{q+1}+1;x),
\end{equation}
where 
$\bm d=(d_1,d_2,\ldots,d_{q+1})$ is a $(q+1)$-tuple nonegative integers,
$$
M_q(\bm\alpha,\bm d)=\prod^q_{j=1}{\alpha_j+d_j-1\choose d_j},
$$
and $\zeta(\alpha_1,\alpha_2,\ldots,\alpha_q+1;x)$ is defined by
$$
\sum_{1\leq n_1<n_2<\cdots<n_q}
(n_1+x)^{-\alpha_1}(n_2+x)^{-\alpha_2}\cdots
(n_q+x)^{-\alpha_q-1}.
$$

Combining Eq.\,(\ref{eq.11}) and Eq.\,(\ref{eq.12}) we have
\begin{eqnarray}\label{eq.13} 
\lefteqn{\sum_{1\leq n_1<n_2<\cdots<n_r}
B\left(n_r,1+x\right)\frac{
P_m\left(H_{n_r}^{(1)}\left(x\right),
\ldots,H_{n_r}^{(m)}\left(x\right)\right)}
{n_1^{\alpha_1}n_2^{\alpha_2}\cdots n_r^{\alpha_r}}}\\
&=&\nonumber
\sum_{|\bm{d}|=m}M_q(\bm\alpha,\bm d)
\zeta(\alpha_1+d_1, \ldots, \alpha_{q-1}+d_{q-1}, 
\alpha_{q}+d_{q}+d_{q+1}+1;x).
\end{eqnarray}
When $x=0$, this equation appears in \cite[Theorem A]{Chen} and
\cite[Eq.\,(21)]{CC1}.
Recently the function
\begin{eqnarray*}
\lefteqn{t(\alpha_1,\ldots,\alpha_{q-1},\alpha_q+1)}\\
&=&\sum_{1\leq n_1<n_2<\cdots<n_q}
(2n_1-1)^{-\alpha_1}(2n_2-1)^{-\alpha_2}\cdots
(2n_{q-1}-1)^{-\alpha_{q-1}}(2n_q-1)^{-\alpha_q-1}
\end{eqnarray*}
is investigated by many authors \cite{Chen1, Hof, SC, Zha1}.
It is known that 
$$
t(\alpha_1,\ldots,\alpha_{q-1},\alpha_q+1)
=2^{-|\bm\alpha|-1}\zeta(\alpha_1,\ldots,\alpha_{q-1},\alpha_q+1;-1/2).
$$
Thus we set $x=-1/2$ in Eq.\,(\ref{eq.13}), then we get a sum formula among $t$ functions
and multiple inverse binomial sums.
\begin{eqnarray*}
\lefteqn{\sum_{1\leq k_1<k_2<\cdots<k_r}
\frac{2^{2k_r}P_m(O_{k_r}^{(1)},\ldots,O_{k_r}^{(m)})}
{{2k_r\choose k_r}
k_1^{\beta_1}k_2^{\beta_2}\cdots k_r^{\beta_r+1}}} \\
&=&2^{|\bm\alpha|+1}\sum_{|\bm d|=m}
M_{q-1}(\bm\alpha,\bm d)
{\alpha_q+d_q\choose d_q}
t(\alpha_1+d_1,\ldots,\alpha_{q-1}+d_{q-1},\alpha_q+d_q+1).
\end{eqnarray*}
On the other hand, for $p\geq 2$, we have
\begin{equation}\label{eq.14} 
{\cal Z}^1_p(s;x)=\sum^\infty_{n=1}\frac{(-1)^{n+1}H_n^{(s)}(x)}{n(p-1)^n}.
\end{equation}
Note that ${\cal Z}^1_p(s;x)={\cal Z}^{\bm\alpha}_p(s;x)$, 
where $r=1$ and $\bm\alpha=\alpha=1$.
Combining Eq.\,(\ref{eq.11}) and Eq.\,(\ref{eq.14}) together, we have some interesting identities,
for example
$$
\sum^\infty_{n=1}\frac{(-1)^{n+1}O_n}{n(p-1)^n}=\arcsin(1/\sqrt{p})^2.
$$
Our paper is organized as follows. In Section 2, we present some preliminaries. In
Section 3, we prove that ${\cal Z}^{\bm\alpha}_p(s;x)$ can be analytically continued 
to an entire function, and interpolates some kind of generalized Bernoulli polynomials
at non-positive integer arguments. In Section 4, we evaluate the values 
${\cal Z}^{\bm\alpha}_p(s;x)$ at positive integers $s=m+1$, where $m\in\mathbb N_0$.
We give some applications on multiple inverse binomial sums in Section 5. 
In the final section, we use the Euler series transformation to give the evaluation
of ${\cal Z}^1_p(s;x)$ for $p\geq 2$.

\section{Preliminaries}
\begin{lemma}
Let $n\in\mathbb N$ and $x\in\mathbb R$, $z\in\mathbb C$ with $|z|<1+x$. Then
\begin{equation}\label{eq.21}   
\frac{B\left(n,1+x-z\right)}{B\left(n,1+x\right)}=
\sum^\infty_{m=0}z^m
P_m\left(H_n^{(1)}\left(x\right),
H_n^{(2)}\left(x\right),\ldots,H_n^{(m)}\left(x\right)\right),
\end{equation}
where $B(x,y)$ is the Euler beta function and 
$P_m(x_1,x_2,\ldots,x_m)$ is the modified Bell polynomial which is defined by
$$
\exp\left(\sum^\infty_{k=1}\frac{x_k}kz^k\right)
=\sum^\infty_{m=0}P_m(x_1,x_2,\ldots,x_m)z^m.
$$
\end{lemma}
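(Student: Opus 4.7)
The plan is to reduce the identity to the generating-function definition of the modified Bell polynomial by first simplifying the ratio of beta functions into a finite product, then taking its logarithm.

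First, I would rewrite $B(n,1+x-z)$ and $B(n,1+x)$ using $B(n,y)=\Gamma(n)\Gamma(y)/\Gamma(n+y)$, cancel the common $\Gamma(n)$, and apply $\Gamma(n+1+y)/\Gamma(1+y)=\prod_{j=1}^n(j+y)$ to both the numerator and denominator. This collapses the ratio to the finite product
$$
\frac{B(n,1+x-z)}{B(n,1+x)}=\prod_{j=1}^{n}\frac{j+x}{j+x-z}=\prod_{j=1}^{n}\frac{1}{1-\frac{z}{j+x}}.
$$

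Next, the hypothesis $|z|<1+x$ (together with $x\in\mathbb R$ with $1+x>0$, which is needed so that $j+x>0$ for $j\ge1$) guarantees $|z/(j+x)|<1$ uniformly in $1\le j\le n$, so the principal branch of $\log(1-z/(j+x))$ admits its Taylor expansion. Taking the logarithm of the finite product and swapping the two sums (valid by absolute convergence) yields
$$
\log\frac{B(n,1+x-z)}{B(n,1+x)}
=\sum_{j=1}^{n}\sum_{k=1}^{\infty}\frac{1}{k}\Bigl(\frac{z}{j+x}\Bigr)^{k}
=\sum_{k=1}^{\infty}\frac{H_n^{(k)}(x)}{k}\,z^{k},
$$
where I have used the definition $H_n^{(k)}(x)=\sum_{j=1}^n(j+x)^{-k}$.

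Exponentiating and invoking the generating function that defines $P_m$, namely $\exp(\sum_{k\ge1}\tfrac{x_k}{k}z^k)=\sum_{m\ge0}P_m(x_1,\ldots,x_m)z^m$, delivers the stated identity. The only non-routine point is the convergence justification: I must check that $|z|<1+x$ is sharp enough to make both the logarithmic series and the resulting power series in $z$ absolutely convergent so that the formal manipulations (interchange of sums, exponentiation of a convergent series term by term) are rigorous. Since $j+x\ge 1+x$ for $j\ge1$, the bound $|z|<1+x$ exactly provides this, so this obstacle is mild and easily handled.
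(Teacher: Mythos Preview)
Your proposal is correct and follows essentially the same approach as the paper: both reduce the ratio of beta functions to the finite product $\prod_{j=1}^{n}\bigl(1-\tfrac{z}{j+x}\bigr)^{-1}$, take the logarithm, expand each $\log$ term via its Taylor series (using $|z|<1+x$ and $j+x\ge 1+x$), swap the finite and infinite sums to obtain $\sum_{k\ge 1}H_n^{(k)}(x)z^k/k$, and then invoke the defining generating function of $P_m$. Your write-up is in fact slightly more explicit about the convergence justification than the paper's.
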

\begin{proof}
Since $B(x,y)=\frac{\Gamma(x)\Gamma(y)}{\Gamma(x+y)}$,
we expand the Euler beta functions into the following form
\begin{eqnarray*}
\frac{B\left(n,1+x-z\right)}{B\left(n,1+x\right)}
&=& \prod^n_{j=1}\left(1-\frac z{j+x}\right)^{-1} \\
&=& \exp\left[-\sum^n_{j=1}\log\left(1-\frac z{j+x}\right)\right] .
\end{eqnarray*}
For $|z|<1+x$ we have
\begin{eqnarray*}
\frac{B\left(n,1+x-z\right)}{B\left(n,1+x\right)}
&=&\exp\left[\sum^n_{j=1}\sum^\infty_{k=1}
\frac 1k\left(\frac z{j+x}\right)^k\right] \\
&=& \exp\left[\sum^\infty_{k=1}\frac{z^k}k
\sum^n_{j=1}\frac 1{(j+x)^k}\right] \\
&=&\exp\left[\sum^\infty_{k=1}\frac{z^k}k
H_n^{(k)}\left(x\right)\right].
\end{eqnarray*}
From the definition of $P_m$ we know that 
$$
\frac{B\left(n,1+x-z\right)}{B\left(n,1+x\right)}
=\sum^\infty_{m=0}z^m
P_m\left(H_n^{(1)}\left(x\right),H_n^{(2)}\left(x\right),
\ldots,H_n^{(m)}\left(x\right)\right).
$$
\end{proof}

\begin{proposition}
For $n\in\mathbb N$, $m\in\mathbb N_0$, $x\in\mathbb R$
with $0<1+x$, we have
\begin{equation}\label{eq.22}   
P_m\left(H_n^{(1)}\left(x\right),
\ldots,H_n^{(m)}\left(x\right)\right)
=\frac{1}{B\left(n,1+x\right)}
\int^\infty_0 e^{-\left(1+x\right)y}(1-e^{-y})^{n-1}\frac{y^m}{m!}\,dy.
\end{equation}
\end{proposition}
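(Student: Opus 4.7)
The plan is to compare two expansions of $B(n,1+x-z)/B(n,1+x)$ in powers of $z$: the one already supplied by the preceding lemma, and an integral representation obtained by substituting into the classical beta integral.

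First I would start from the integral representation
\[
B(n,1+x-z)=\int_0^1 t^{n-1}(1-t)^{x-z}\,dt
\]
and make the change of variable $t=1-e^{-y}$, $dt=e^{-y}\,dy$, $1-t=e^{-y}$. This turns the integral into
\[
B(n,1+x-z)=\int_0^\infty (1-e^{-y})^{n-1}e^{-(1+x-z)y}\,dy
=\int_0^\infty (1-e^{-y})^{n-1}e^{-(1+x)y}e^{zy}\,dy,
\]
valid for $\Re(x-z)>-1$.

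Next I would expand $e^{zy}=\sum_{m=0}^\infty z^m y^m/m!$ and interchange sum and integral. The interchange is justified for $z$ in a small real (or complex) neighborhood of $0$ by monotone/dominated convergence: the factor $e^{-(1+x)y}(1-e^{-y})^{n-1}e^{|z|y}$ is integrable whenever $|z|<1+x$, and the partial sums of the exponential series are dominated by this integrable function. This yields
\[
\frac{B(n,1+x-z)}{B(n,1+x)}=\sum_{m=0}^\infty \frac{z^m}{B(n,1+x)}\int_0^\infty e^{-(1+x)y}(1-e^{-y})^{n-1}\frac{y^m}{m!}\,dy.
\]

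Finally I would apply Eq.\,(\ref{eq.21}) of the preceding lemma, which gives the same ratio as a power series $\sum_{m=0}^\infty z^m P_m(H_n^{(1)}(x),\ldots,H_n^{(m)}(x))$. Since both convergent power series agree on a neighborhood of $z=0$, their coefficients of $z^m$ coincide, which is exactly Eq.\,(\ref{eq.22}). The only mildly delicate step is the Fubini/dominated-convergence justification for swapping sum and integral, but this is routine given the explicit exponential bound.
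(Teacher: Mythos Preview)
Your proof is correct and follows essentially the same route as the paper: substitute $t=1-e^{-y}$ into the beta integral for $B(n,1+x-z)$, expand $e^{zy}$, interchange sum and integral (the paper cites Fubini, you give a dominated-convergence bound), and compare coefficients with the series from the preceding lemma. The only cosmetic difference is that the paper works with $B(n,1+x-z)$ directly while you divide through by $B(n,1+x)$ before matching coefficients.
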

\begin{proof}
For $\Re(1+x-z)>0$, we have the definition of $B(n,1+x-z)$ as
$$
B\left(n,1+x-z\right)
=\int^1_0t^{n-1}(1-t)^{x-z}\,dt.
$$
Changing the variable $t=1-e^{-y}$, we obtain
\begin{eqnarray*}
B\left(n,1+x-z\right)
&=&\int^\infty_0e^{-y\left(1+x\right)}(1-e^{-y})^{n-1}e^{yz}\,dy\\
&=&\sum^\infty_{m=0}z^m
\int^\infty_0e^{-\left(1+x\right)y}(1-e^{-y})^{n-1}
\frac{y^m}{m!}\,dy.
\end{eqnarray*}
Using the Fubini's theorem we can change the order of the integral 
and the summation in the last equation.
On the other hand, Eq.(\ref{eq.21}) gives another expression 
of $B(n,1+x-z)$. Comparing the coefficient of $z^m$, 
we conclude the result.
\end{proof}

For positive integers $\alpha_1,\alpha_2,\ldots,\alpha_q$, 
a multiple zeta value or $q$-fold Euler sums
of depth $q$ and weight $\varpi=\alpha_1+\alpha_2+\cdots+\alpha_q+1$
is defined as
$$
\zeta(\alpha_1,\alpha_2,\ldots,\alpha_{q-1},\alpha_q+1)
=\sum_{1\leq n_1<n_2<\cdots<n_q}
n_1^{-\alpha_1}n_2^{-\alpha_2}\cdots n_{q-1}^{-\alpha_{q-1}}n_q^{-\alpha_q-1}.
$$
Due to Kontsevich, we can express a multiple zeta value 
$\zeta(\alpha_1, \alpha_2, \ldots, \alpha_{q}+1)$ 
of depth $q$ and weight $\varpi=q+r$ as 
an iterated integral (or Drinfeld integrals) 
$$
\int_{E_{\varpi}} \Omega_1\Omega_2\cdots \Omega_{\varpi}
$$
with
$$
   \Omega_{j}=\left\{\begin{array}{ll} \frac{dt_j}{1-t_j}, & 
   \mbox{if}\; j=1, \alpha_1+1, \alpha_1+\alpha_2+1, \ldots,
    \alpha_1+\alpha_2+\ldots +\alpha_{q-1}+1; \\
   \frac{dt_j}{t_j}, &\mbox{otherwise}, \end{array} \right.
$$
over the simplex defined by
$$
E_{\varpi}: 0<t_1<t_2<\cdots<t_{\varpi}<1.
$$Then its dual has the iterated integral representation obtained by the change of variables:
\begin{equation}\label{eq.23} 
u_1=1-t_{\varpi}, u_2=1-t_{\varpi-1}, \ldots, u_{\varpi}=1-t_1.
\end{equation}
This leads to the identity
$$
\int_{E_{\varpi}} 
\Omega_1\Omega_2\cdots\Omega_{\varpi}=
\int_0^1 
\omega_1\omega_2\cdots\omega_{\varpi},
$$
where for $1\leq k\leq \varpi$,
$$
\omega_k=\left\{\begin{array}{ll}
\frac{du_k}{u_k},&\mbox{ if }\Omega_{\varpi+1-k}=
\frac{dt_{\varpi+1-k}}{1-t_{\varpi+1-k}},\\
\frac{du_k}{1-u_k},&\mbox{ if }\Omega_{\varpi+1-k}=
\frac{dt_{\varpi+1-k}}{t_{\varpi+1-k}}.
\end{array}\right.
$$
In particular, we have $\omega_1=du_1/(1-u_1)$ and 
$\omega_{\varpi}=du_{\varpi}/u_{\varpi}$.
If we use $\zeta(\beta_1,\beta_2,\ldots,\beta_{r}+1)$
to represent the resulting integral. Then
$$
\zeta(\alpha_1, \alpha_2, \ldots, \alpha_{q}+1)=
\zeta(\beta_1,\beta_2,\ldots,\beta_{r}+1),
$$
this is the duality theorem from another viewpoint \cite{Chen}.

Throught out this paper we always use the following statements:
Let $q$, $r$ be a pair of positive integers,
and $\zeta(\alpha_1, \alpha_2, \ldots, \alpha_{q}+1)$ 
be a multiple zeta value of depth $q$ and weight $\varpi=q+r$ 
with its dual $\zeta(\beta_1,\beta_2,\ldots,\beta_{r}+1)$
of depth $r$.

Let us consider a special multiple Hurwitz zeta function defined by
$$
\zeta(\alpha_1,\alpha_2,\ldots,\alpha_q+1;x)
=\sum_{1\leq n_1<n_2<\cdots<n_q}
(n_1+x)^{-\alpha_1}(n_2+x)^{-\alpha_2}\cdots
(n_q+x)^{-\alpha_q-1}.
$$
Its iterated integral representation is
$$
\int_{E_{\varpi}}t_1^{\,x}
\Omega_1\Omega_2\cdots\Omega_{\varpi}.
$$
Using the same change of variables as Eq.(\ref{eq.23}) 
and the evaluation of the beta function,
we can perform it as following summation.
$$
\int_{E_{\varpi}}t_1^{\,x}
\Omega_1\Omega_2\cdots\Omega_{\varpi}
=\sum_{1\leq k_1<k_2<\cdots<k_{r}}\frac{B(x+1,k_r)}
{k_1^{\beta_1}k_2^{\beta_2}\cdots k_{r}^{\beta_{r}}}
$$
under the condition $x>-1$ (ref. \cite[Proposition 1]{Chen}). 
Substituting $x$ to $x-z$ and applying Lemma 1, we have
\begin{equation}\label{eq.24}
\int_{E_{\varpi}}t_1^{\,x-z}
\Omega_1\Omega_2\cdots\Omega_{\varpi}
=\sum_{1\leq k_1<k_2<\cdots<k_{r}}B\left(1+x,k_r\right)
\frac{
{\displaystyle\sum^\infty_{m=0}
z^m}
P_m\left(H^{(1)}_{k_r}\left(x\right),\ldots,
H^{(m)}_{k_r}\left(x\right)\right)}
{k_1^{\beta_1}k_2^{\beta_2}\cdots
k_{r}^{\beta_{r}}}.
\end{equation}
That is to say we can express $\zeta(\alpha_1,\ldots,\alpha_q+1;x-z)$
as a power series of $z$ which coefficients are some multiple Euler sums.
\begin{proposition}
Let $q$, $r$ be a pair of positive integers,
and $\zeta(\alpha_1, \alpha_2, \ldots, \alpha_{q}+1)$ 
be a multiple zeta value of depth $q$ and weight $\varpi=q+r$ 
with its dual $\zeta(\beta_1,\beta_2,\ldots,\beta_{r}+1)$
of depth $r$. For $|z|<x+1$,
$$
\zeta(\alpha_1,\ldots,\alpha_q+1;x-z)=
\sum^\infty_{m=0}z^m\left(\sum_{1\leq k_1<\cdots<k_r}
\frac{B(1+x,k_r)}{k_1^{\beta_1}\cdots k_r^{\beta_r}}
P_m(H^{(1)}_{k_r}(x),\ldots,H^{(m)}_{k_r}(x))\right).
$$
\end{proposition}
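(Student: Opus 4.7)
The plan is to combine the iterated-integral representation of $\zeta(\alpha_1,\ldots,\alpha_q+1;x-z)$ with the duality change of variables from Eq.\,(\ref{eq.23}) and then apply Lemma~1 to expand the resulting beta-function ratio as a power series in $z$.

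First I would recall that for $\Re(x-z)>-1$ the multiple Hurwitz zeta function admits the iterated integral
$$
\zeta(\alpha_1,\ldots,\alpha_q+1;x-z)=\int_{E_{\varpi}}t_1^{\,x-z}\Omega_1\Omega_2\cdots\Omega_{\varpi},
$$
exactly as described in the excerpt just above Eq.\,(\ref{eq.24}). Performing the substitution $u_j=1-t_{\varpi+1-j}$ turns this simplex integral into the dual integral over $0<u_1<\cdots<u_\varpi<1$, where the factor $t_1^{\,x-z}$ becomes $(1-u_\varpi)^{x-z}$, and the remaining forms become the $\omega_k$ of the dual word associated with $(\beta_1,\ldots,\beta_r+1)$.

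Next, following \cite[Proposition 1]{Chen} I would integrate out the $\omega_k$'s one variable at a time. The inner repeated $du/u$ integrations produce the factors $1/k_j^{\beta_j}$, while the final integration in $u_\varpi$ against $(1-u_\varpi)^{x-z}u_\varpi^{k_r-1}$ yields the beta function $B(1+x-z,k_r)$. Thus
$$
\zeta(\alpha_1,\ldots,\alpha_q+1;x-z)=\sum_{1\leq k_1<\cdots<k_r}\frac{B(1+x-z,k_r)}{k_1^{\beta_1}\cdots k_r^{\beta_r}}.
$$
Writing $B(1+x-z,k_r)=B(1+x,k_r)\cdot\bigl[B(k_r,1+x-z)/B(k_r,1+x)\bigr]$ (by the symmetry of the beta function), Lemma~1 with $n=k_r$ expands the bracketed ratio as $\sum_{m\ge 0}z^m P_m(H^{(1)}_{k_r}(x),\ldots,H^{(m)}_{k_r}(x))$, which is precisely the right-hand side of Eq.\,(\ref{eq.24}).

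Finally I would swap the order of the outer series $\sum_{k_1<\cdots<k_r}$ and the inner series $\sum_{m\ge 0}$, which yields the claimed power series in $z$. The one nontrivial point, and what I would flag as the main obstacle, is justifying this interchange on the disk $|z|<x+1$. The idea is to bound $|P_m(H^{(1)}_{k_r}(x),\ldots,H^{(m)}_{k_r}(x))|$ via the generating-function identity and replace $z$ by $|z|$ and the harmonic sums by their values at a real point, giving absolute convergence of the iterated double series whenever $|z|<1+x$ (so that the beta-function integral representation $B(k_r,1+x-|z|)$ used in the proof of Lemma~1 still converges). With absolute convergence in hand, Fubini permits the interchange and the proposition follows.
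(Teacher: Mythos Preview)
Your proposal is correct and follows essentially the same route as the paper: the paper also starts from the iterated-integral representation $\int_{E_\varpi}t_1^{\,x}\Omega_1\cdots\Omega_\varpi$, applies the duality change of variables Eq.\,(\ref{eq.23}) together with \cite[Proposition~1]{Chen} to reach $\sum_{k_1<\cdots<k_r}B(1+x,k_r)/(k_1^{\beta_1}\cdots k_r^{\beta_r})$, then substitutes $x\mapsto x-z$ and invokes Lemma~1 to obtain Eq.\,(\ref{eq.24}). The only difference is that you spell out the absolute-convergence argument for swapping $\sum_{k_1<\cdots<k_r}$ and $\sum_{m\ge 0}$, which the paper leaves implicit.
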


\section{Zeta functions $\az$}
\begin{definition}
Let $\bm{v}=(v_1,v_2,\ldots,v_k)$ be a $k$-tuple positive integers,
$p,x$ be real numbers, and $s$ be a complex number,  with 
$\Re(s)>1-k$, $p\geq 1$, and $x+1>0$.
Then we define the zeta function $\az$ as
$$
\az
=\frac{1}{\Gamma(s)}\int^\infty_0
\frac{e^{-xt}}
{e^t-1}\,Li_{\bm{v}}\left(\frac{1-e^{-t}}p\right)
t^{s-1}\,dt,
$$
where 
$$
Li_{\bm{v}}(z)=\sum_{1\leq n_1<n_2<\cdots<n_k}
\frac{z^{n_k}}{n_1^{v_1}n_2^{v_2}\cdots n_k^{v_k}}
$$
is the one-variable multiple polylogarithms. We also define the polynomials 
$B_{p,m}^{\bm v}(x)$ as 
\begin{equation}\label{eq.31} 
\frac{e^{xt}}{e^t-1}Li_{\bm v}\left(\frac{1-e^{-t}}p\right)
=\sum^\infty_{m=0}B_{p,m}^{\bm v}(x)\frac{t^m}{m!}.
\end{equation}
\end{definition}
The function $\azx{0}$ is $\xi({\bm v};s)$ which is defined in \cite{AK}.
Let $\bm{v}=v$ be a positive integer. Then 
$$
\azx{-\frac12}=2^s\alpha_v(s),
\quad\mbox{and}\quad
{\mathcal Z}^{\bm v}_2\left(s;-\frac 12\right)=2^s\beta_v(s),
$$
where $\alpha_v(s)$ and $\beta_v(s)$ are appeared
in \cite{CC1} which are defined by Coppo and Candelpergher. 
Clearly $\azx{0}=\xi_v(s)$,
where $\xi_v(s)$ is the Arakawa-Kaneko zeta function \cite{AK}.
Moreover $B_{1,m}^{\bm v}(x)$ is the multi-poly-Bernoulli polynomial $C_m^{\bm v}(x)$
which is defined by Imatomi \cite{Ima}.

We now show that the function $\az$ can be analytically 
continued to an entire function, and interpolates $B_{p,m}^{\bm v}(x)$ 
at non-positive integer arguments. 

It is known that $Li_{\bm v}(z)$ is holomorphic for 
$z\in\mathbb C\backslash[1,\infty)$. 
Since $p\geq 1$, and $(1-e^{-t})/p\in[1,\infty)$ is equivalent to
$\Im(t)=(2j+1)\pi$ for some $j\in\mathbb Z$.
Therefore we have 
\begin{lemma}
The function $Li_{\bm v}\left(\frac{1-e^{-t}}p\right)$ is 
holomorphic for $t\in\mathbb C$ with $|\Im(t)|<\pi$.
\end{lemma}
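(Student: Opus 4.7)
The plan is to reduce the question to a pure preimage computation, using the standard fact (which the paper cites just before the lemma) that $Li_{\bm v}(z)$ is holomorphic on $\mathbb C\setminus[1,\infty)$. Since $t\mapsto(1-e^{-t})/p$ is entire, the composition $Li_{\bm v}((1-e^{-t})/p)$ will be holomorphic on any open set $U\subset\mathbb C$ for which $(1-e^{-t})/p\notin[1,\infty)$ for every $t\in U$. So the whole task is to show that the horizontal strip $S=\{t\in\mathbb C:|\Im(t)|<\pi\}$ is mapped into $\mathbb C\setminus[1,\infty)$.

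To verify this, I would write $t=a+bi$ with $a\in\mathbb R$ and $|b|<\pi$ and expand
$$
\frac{1-e^{-t}}{p}=\frac{1-e^{-a}\cos b}{p}+i\,\frac{e^{-a}\sin b}{p}.
$$
For the value to lie on the branch cut $[1,\infty)$, the imaginary part must vanish; since $e^{-a}>0$, this forces $\sin b=0$, and the constraint $|b|<\pi$ then forces $b=0$. In that case $t=a$ is real and the value equals $(1-e^{-a})/p$. Because $e^{-a}>0$ we have $1-e^{-a}<1$, and since $p\geq1$ this gives $(1-e^{-a})/p<1$, so the point is still not in $[1,\infty)$.

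Combining these two cases shows that $(1-e^{-t})/p\notin[1,\infty)$ for every $t\in S$, and so the lemma follows immediately by composition of holomorphic maps. There is no real obstacle here: the only thing to be careful about is invoking both restrictions $|\Im(t)|<\pi$ (to rule out the $b=(2j+1)\pi$ family where $1+e^{-a}$ can exceed $p$) and $p\geq1$ (to rule out the $b=0$ case when $a$ is large); dropping either hypothesis would allow $(1-e^{-t})/p$ to hit the cut, so both must be used explicitly in the argument.
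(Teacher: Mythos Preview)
Your proposal is correct and follows essentially the same route as the paper: the paper's entire argument is the two sentences preceding the lemma, namely that $Li_{\bm v}$ is holomorphic off $[1,\infty)$ and that $(1-e^{-t})/p$ lands on $[1,\infty)$ only when $\Im(t)=(2j+1)\pi$. You have simply carried out in detail the preimage computation the paper leaves implicit (and in fact your version is slightly more accurate, since the paper's ``equivalent to'' should really be ``only if'' when $p>1$, though the needed direction is the same).
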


From \cite[Lemma 2.2 (ii)]{KT}, we have 
$Li_{\bm v}(1-e^t)=O(t^k)$ as $t\rightarrow 0^+$ and
$Li_{\bm v}(1-e^t)=O(t^{v_1+\cdots+v_k})$ as $t\rightarrow\infty$.
For $t\in\mathbb R^+$, we have
\begin{eqnarray*}
\left|Li_{\bm v}\left(\frac{1-e^{-t}}p\right)\right|
&\leq& \sum_{1\leq n_1<\cdots<n_k}\left|\frac{(1-e^{-t})^{n_k}}
{n_1^{v_1}\cdots n_k^{v_k}p^{n_k}}\right| \\
&\leq& \sum_{1\leq n_1<\cdots<n_k}\left|\frac{e^{-n_kt}(e^t-1)^{n_k}}
{n_1^{v_1}\cdots n_k^{v_k}p^{n_k}}\right| 
\leq\sum_{1\leq n_1<\cdots<n_k}\left|\frac{(e^t-1)^{n_k}}
{n_1^{v_1}\cdots n_k^{v_k}}\right|.
\end{eqnarray*} 
Hence $Li_{\bm v}\left(\frac{1-e^{-t}}p\right)$ has the same 
estimates as $Li_{\bm v}(1-e^t)$. Thus we conclude that the following results.
\begin{lemma}
Let ${\bm v}=(v_1,v_2,\ldots,v_k)$ be a $k$-tuple positive integers
and $p\geq 1$.
For $t\in\mathbb R^+$, we have the estimates
$Li_{\bm v}\left(\frac{1-e^{-t}}p\right)=O(t^k)$ as $t\rightarrow 0^+$
and $Li_{\bm v}\left(\frac{1-e^{-t}}p\right)=O(t^{v_1+\cdots+v_k})$
as $t\rightarrow \infty$.
\end{lemma}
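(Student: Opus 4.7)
The plan is to follow the term-wise majorization already sketched in the paragraph preceding the lemma: writing $1-e^{-t} = e^{-t}(e^t-1)$ and using $p \geq 1$ along with $e^{-n_k t} \leq 1$, each summand in the defining series of $Li_{\bm v}((1-e^{-t})/p)$ is dominated by $(e^t-1)^{n_k}/(n_1^{v_1}\cdots n_k^{v_k})$. This reduces the two asymptotic estimates to the corresponding estimates for $Li_{\bm v}(1-e^t)$, which are cited from Kaneko--Tsumura (Lemma 2.2(ii)).

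For the behaviour as $t \to 0^+$, a Taylor expansion gives $e^t-1 = t+O(t^2)$. Since the smallest admissible value of $n_k$ under the constraint $1\leq n_1<\cdots<n_k$ is $n_k=k$, the leading-order contribution to the majorant series is $t^k/(1^{v_1}2^{v_2}\cdots k^{v_k})$, so the whole majorant is $O(t^k)$. This step is elementary and uses no analytic input beyond absolute convergence of the polylogarithm series near the origin.

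For the behaviour as $t \to \infty$ the majorant series itself diverges once $e^t-1\geq 1$, so pure term-wise bounding is insufficient. If $p > 1$, the argument $(1-e^{-t})/p$ stays bounded below $1$ and $Li_{\bm v}((1-e^{-t})/p)$ tends to the finite value $Li_{\bm v}(1/p)$, which is $O(1)$ and therefore certainly $O(t^{v_1+\cdots+v_k})$. If $p=1$, the argument $1-e^{-t}$ approaches the singular point $z=1$, and the polynomial growth $O(t^{v_1+\cdots+v_k})$ must be imported from the KT estimate on $Li_{\bm v}(1-e^t)$; the transfer is justified by $|1-e^{-t}|\leq |1-e^t|$ for $t>0$, so that the distance to the singularity at $z=1$ is comparable and the same polynomial bound applies. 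This last step, the $p=1$ case at infinity, is the only genuine obstacle; everything else is routine manipulation of the defining series.
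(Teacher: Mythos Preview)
You correctly diagnose that the term-wise majorant $\sum (e^t-1)^{n_k}/(n_1^{v_1}\cdots n_k^{v_k})$ diverges once $t>\log 2$, so the displayed inequality chain cannot by itself yield the estimate at infinity; and your treatment of the case $p>1$ (the argument tends to $Li_{\bm v}(1/p)<\infty$) is clean. The gap is in your transfer argument for $p=1$. The inequality $|1-e^{-t}|\le |1-e^t|$ compares the \emph{moduli} of the two arguments, not their distances to the branch point $z=1$; those distances are $|1-(1-e^{-t})|=e^{-t}$ and $|1-(1-e^t)|=e^t$, which are reciprocal rather than comparable. The cited KT bound $Li_{\bm v}(1-e^t)=O(t^{v_1+\cdots+v_k})$ concerns the analytic continuation of $Li_{\bm v}$ along the negative real axis as $z\to-\infty$, whereas what you need is the behaviour of $Li_{\bm v}(z)$ as $z\to 1^{-}$ from inside the unit interval. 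These are different asymptotic regimes, and no modulus comparison or ``same distance to the singularity'' heuristic carries one over to the other.

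To close the $p=1$ case you need an independent estimate $Li_{\bm v}(z)=O\bigl((\log\frac{1}{1-z})^{v_1+\cdots+v_k}\bigr)$ as $z\to 1^{-}$, which under the substitution $z=1-e^{-t}$ becomes exactly the claimed $O(t^{v_1+\cdots+v_k})$. This is standard and follows by induction on the weight from the differential relations $z\,\frac{d}{dz}Li_{(v_1,\ldots,v_k)}(z)=Li_{(v_1,\ldots,v_k-1)}(z)$ for $v_k\ge 2$ and $(1-z)\,\frac{d}{dz}Li_{(v_1,\ldots,v_{k-1},1)}(z)=Li_{(v_1,\ldots,v_{k-1})}(z)$. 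Once this is in hand, monotonicity of $Li_{\bm v}$ on $(0,1)$ gives $Li_{\bm v}((1-e^{-t})/p)\le Li_{\bm v}(1-e^{-t})$ for all $p\ge 1$, so the separate $p>1$ argument becomes unnecessary. The paper's own argument, incidentally, shares the same lacuna: it bounds by the divergent absolute-value series and then asserts the conclusion, so you have located a genuine weak spot rather than merely failed to reproduce a complete proof.
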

Base on a similar method in \cite[Theorem 2.3]{KT}. We give the proof of 
the function $\az$  can be analytically continued to whole $s$-plane.
\begin{proposition}
The function $\az$ can be analytically continued to whole $s$-plane
as an entire function. And the values of $\az$ at non-positive integers
are given by 
$$
\azsp{-m}=(-1)^mB_{p,m}^{\bm v}(-x).
$$
\end{proposition}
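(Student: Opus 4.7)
I plan to follow the Hankel-contour method of Kaneko--Tsumura \cite{KT}. Write
\[
f(t) = \frac{e^{-xt}}{e^t-1}\,Li_{\bm v}\!\left(\frac{1-e^{-t}}{p}\right),
\]
so that $\az = \Gamma(s)^{-1}\int_0^\infty f(t)\,t^{s-1}\,dt$ in the original half-plane $\Re(s)>1-k$. By Lemma 2 together with the holomorphicity of $e^{-xt}/(e^t-1)$ away from $t=0$, the function $f$ is holomorphic on the slit strip $\{|\Im t|<\pi\}\setminus\{0\}$; by Lemma 3 and the hypothesis $1+x>0$, it satisfies $f(t)=O(t^{k-1})$ near the origin and decays like $t^{|\bm v|}e^{-(1+x)t}$ as $\Re(t)\to\infty$. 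Let $C$ be the Hankel contour from $+\infty$ along the upper side of the real axis, around the small circle $|t|=\epsilon$ (with $0<\epsilon<\pi$) counter-clockwise, and back to $+\infty$ along the lower side. Then
\[
H(s):=\int_C f(t)\,t^{s-1}\,dt
\]
converges absolutely for every $s\in\mathbb C$ and, by Morera's theorem, defines an entire function of $s$.

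Next, for $\Re(s)>1-k$ the small-circle contribution is $O(\epsilon^{k+\Re(s)-1})$ and vanishes as $\epsilon\to 0^+$. Fixing the branch $\arg t=0$ on the upper leg and $\arg t=2\pi$ on the lower, the two infinite legs combine into $(e^{2\pi is}-1)\int_0^\infty f(r)\,r^{s-1}\,dr$, yielding
\[
H(s) = (e^{2\pi is}-1)\,\Gamma(s)\,\az.
\]
Using the reflection identity $(e^{2\pi is}-1)\,\Gamma(s) = 2\pi i\,e^{i\pi s}/\Gamma(1-s)$, this rearranges to
\[
\az = \frac{e^{-i\pi s}\,\Gamma(1-s)}{2\pi i}\,H(s),
\]
which extends $\az$ as a meromorphic function on $\mathbb C$. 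To conclude entireness, the apparent poles of $\Gamma(1-s)$ at positive integers $s=n$ must be shown removable: for such $s$ the factor $t^{n-1}$ is single-valued, so the two infinite legs of $C$ cancel, leaving only $\oint_{|t|=\epsilon}f(t)\,t^{n-1}\,dt$; since $f(t)\,t^{n-1}$ is holomorphic at $t=0$ (as $n\ge 1$ and $f$ has a convergent Taylor expansion), this loop integral equals $0$, so $H(n)=0$ and the pole is cancelled.

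Finally, for $s=-m$ with $m\in\mathbb N_0$, the same single-valuedness argument reduces $H(-m)$ to $2\pi i$ times the residue of $f(t)\,t^{-m-1}$ at $t=0$. Comparing with (\ref{eq.31}) (with $x$ replaced by $-x$) gives the Taylor expansion
\[
f(t) = \sum_{j\geq 0} B^{\bm v}_{p,j}(-x)\,\frac{t^j}{j!},
\]
so $\operatorname{Res}_{t=0}[f(t)\,t^{-m-1}] = B^{\bm v}_{p,m}(-x)/m!$. Substituting $e^{-i\pi(-m)}=(-1)^m$ and $\Gamma(m+1)=m!$ into the continuation formula yields
\[
\azsp{-m} = \frac{(-1)^m\,m!}{2\pi i}\cdot\frac{2\pi i\,B^{\bm v}_{p,m}(-x)}{m!} = (-1)^m\,B^{\bm v}_{p,m}(-x),
\]
as claimed. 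The main technical hurdle is the removability check at positive integers (verifying $H(n)=0$); once that is in place, the evaluation at non-positive integers reduces to a single residue computation.
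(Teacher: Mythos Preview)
Your proof is correct and follows essentially the same Hankel-contour argument as the paper (which itself credits \cite{KT}): both define the entire function $H(s)$ via the contour integral, relate it to $\az$ through the factor $(e^{2\pi is}-1)\Gamma(s)$, and compute the values at $s=-m$ by a residue. The only cosmetic difference is that you verify removability at positive integers by checking $H(n)=0$ directly, whereas the paper simply invokes the holomorphy of the original integral on $\Re(s)>0$.
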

\begin{proof}
Let $\mathcal C$ be the standard contour, namely the path consisting of the positive 
real axis from the infinity to (sufficiently small) $\varepsilon$ (`top side'), 
a counter clockwise circle $\mathcal C_\varepsilon$ around the origin of radius $\varepsilon$,
and the positive real axis from $\varepsilon$ to the infinity (`bottom side').
Let 
\begin{eqnarray*}
H^{\bm v}_p(s;x) &=& \int_{\cal C}
\frac{e^{-xt}}{e^t-1}\,Li_{\bm{v}}\left(\frac{1-e^{-t}}p\right)t^{s-1}\,dt\\
&=& (e^{2\pi is}-1)\int^\infty_{\varepsilon}
\frac{e^{-xt}}{e^t-1}\,Li_{\bm{v}}\left(\frac{1-e^{-t}}p\right)t^{s-1}\,dt\\
&&\qquad\qquad+\int_{\cal C_\varepsilon}
\frac{e^{-xt}}{e^t-1}\,Li_{\bm{v}}\left(\frac{1-e^{-t}}p\right)t^{s-1}\,dt.
\end{eqnarray*}
It follows from the above lemma that $H^{\bm v}_p(s;x)$
is entire, because the integrand has no singularity on $\mathcal C$
and the contour integral is absolutely convergent for all $s\in\mathbb C$.
Suppose $\Re(s)>1-k$. The last integral tends to $0$ 
as $\varepsilon\rightarrow 0$. Hence
$$
\az=\frac1{(e^{2\pi is}-1)\Gamma(s)}H^{\bm v}_p(s;x),
$$
which can be analytically continued to $\mathbb C$, and is entire.
In fact, $\az$ is holomorphic for $\Re(s)>0$, hence no singularity
at any positive integer. Let $s=-m\leq 0$ and by Eq.\,(\ref{eq.31}),
we have
\begin{eqnarray*}
\azs{-m} &=& \frac{(-1)^mm!}{2\pi i}H^{\bm v}_p(-m;x) \\
&=& \frac{(-1)^mm!}{2\pi i}\int_{\mathcal C_{\varepsilon}}
t^{-m-1}\sum^\infty_{n=0}B_{p,n}^{\bm v}(-x)\frac{t^n}{n!}\,dt
=(-1)^mB_{p,m}^{\bm v}(-x).
\end{eqnarray*}
This completes our proof.
\end{proof}
\section{Values at positive integers $s$ in $\az$}
\begin{theorem}
Let $\bm{v}=(v_1,v_2,\ldots,v_k)$ be a $k$-tuple positive integers,
$m\in\mathbb N_0$, $p,x$ be real numbers with $p\geq 1$, $x+1>0$. Then
$$
\azsp{m+1}=
\sum_{1\leq n_1<n_2<\cdots<n_k}
B\left(n_k,1+x\right)\frac{
P_m\left(H_{n_k}^{(1)}\left(x\right),
\ldots,H_{n_k}^{(m)}\left(x\right)\right)}
{p^{n_k}n_1^{v_1}n_2^{v_2}\cdots n_k^{v_k}}.
$$
\end{theorem}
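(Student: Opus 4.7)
The plan is to substitute $s=m+1$ in the integral defining $\azsp{m+1}$, expand the polylogarithm as a multiple series, swap sum and integral, and then recognize each integral against Proposition 1.

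First I would set $s = m+1$, so $\Gamma(s)=m!$, and write
$$
\azsp{m+1}=\frac{1}{m!}\int_0^\infty \frac{e^{-xt}}{e^t-1}\,Li_{\bm v}\!\left(\frac{1-e^{-t}}{p}\right) t^m\,dt.
$$
Next I would insert the series definition
$$
Li_{\bm v}\!\left(\frac{1-e^{-t}}{p}\right)=\sum_{1\leq n_1<\cdots<n_k}\frac{(1-e^{-t})^{n_k}}{p^{n_k}\,n_1^{v_1}\cdots n_k^{v_k}},
$$
and combine the factor $1/(e^t-1)=e^{-t}/(1-e^{-t})$ with $(1-e^{-t})^{n_k}$ to kill one power of $1-e^{-t}$, producing the single integrand $e^{-(1+x)t}(1-e^{-t})^{n_k-1}\,t^m$ up to the summation coefficients.

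Before exchanging sum and integral I would verify absolute convergence: the bound $|(1-e^{-t})^{n_k-1}|\le 1$ together with $x+1>0$ makes $\sum_{\bm n}\int_0^\infty e^{-(1+x)t}(1-e^{-t})^{n_k-1}t^m\,dt/(p^{n_k}n_1^{v_1}\cdots n_k^{v_k})$ dominated by $\Gamma(m+1)\,Li_{\bm v}(1/p)/(x+1)^{m+1}$ when $p\ge 1$, which is finite (and similarly harmless if $p=1$ thanks to the actual decay of $(1-e^{-t})^{n_k-1}$). Fubini's theorem then legitimizes the interchange, yielding
$$
\azsp{m+1}=\sum_{1\leq n_1<\cdots<n_k}\frac{1}{p^{n_k}n_1^{v_1}\cdots n_k^{v_k}}\cdot\frac{1}{m!}\int_0^\infty e^{-(1+x)t}(1-e^{-t})^{n_k-1}t^m\,dt.
$$

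Finally I would invoke Proposition 1, which says exactly that
$$
\frac{1}{m!}\int_0^\infty e^{-(1+x)t}(1-e^{-t})^{n-1}t^m\,dt = B(n,1+x)\,P_m\!\left(H_n^{(1)}(x),\ldots,H_n^{(m)}(x)\right),
$$
so substituting $n=n_k$ finishes the proof. The only place that really needs care is the Fubini step; the bookkeeping with $e^{-t}/(1-e^{-t})$ is automatic, and everything else is a direct appeal to the preliminaries.
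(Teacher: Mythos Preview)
Your proposal is correct and follows essentially the same route as the paper: set $s=m+1$, expand $Li_{\bm v}$, interchange sum and integral, and apply Proposition~1 (Eq.~(\ref{eq.22})). In fact you give more detail than the paper, which simply asserts the interchange without justification; your Fubini argument for $p>1$ is clean, and your remark that the $p=1$ case is handled by the genuine decay of $(1-e^{-t})^{n_k-1}$ (equivalently, the $n_k^{-(1+x)}$ decay of $B(n_k,1+x)$) is the right observation.
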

\begin{proof}
Let $s=m+1$ in the definition of $\az$,
and change the order of the integral and the summation, we have
$$
\azsp{m+1}
=\sum_{1\leq n_1<n_2<\cdots<n_k}
\frac 1{p^{n_k}n_1^{v_1}n_2^{v_2}\cdots n_k^{v_k}}
\int^\infty_0e^{-(1+x)t}(1-e^{-t})^{n_k-1}\frac{t^m}{m!}\,dt.
$$
Applying Eq.(\ref{eq.22}), we conclude the result. 
\end{proof}

The following theorem we express $\azs{m+1}$
as a linear combination of $\zeta(\alpha_1,\ldots,\alpha_q,\alpha_q+1;x)$.

\begin{theorem}
Let $q$, $r$ be a pair of positive integers,
and $\zeta(\alpha_1, \alpha_2, \ldots, \alpha_{q}+1)$ 
be a multiple zeta value of depth $q$ and weight $\varpi=q+r$ 
with its dual $\zeta(\beta_1,\beta_2,\ldots,\beta_{r}+1)$
of depth $r$.
Then for $m\in\mathbb N_0$, $x+1>0$, 
and $\bm\beta=(\beta_1,\beta_2,\ldots,\beta_r)$, we have 
$$
\azbs{m+1}=
\sum_{|\bm{d}|=m}M_q(\bm\alpha,\bm d)
\zeta(\alpha_1+d_1, \ldots, \alpha_{q-1}+d_{q-1}, 
\alpha_{q}+d_{q}+d_{q+1}+1;x),
$$
where 
$\bm d=(d_1,d_2,\ldots,d_{q+1})$ is a $(q+1)$-tuple nonegative integers.
\end{theorem}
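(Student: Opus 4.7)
The plan is to recognize $\azbs{m+1}$ as the $z^m$-Taylor coefficient of the Hurwitz-like multiple zeta function $\zeta(\alpha_1,\ldots,\alpha_q+1;x-z)$, and then to compute that same Taylor coefficient in a second, purely algebraic way.

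First I would apply Theorem 1 (with $\bm v=\bm\beta$) to rewrite
$$
\azbs{m+1}=\sum_{1\le k_1<\cdots<k_r}B(k_r,1+x)\,\frac{P_m\bigl(H^{(1)}_{k_r}(x),\ldots,H^{(m)}_{k_r}(x)\bigr)}{k_1^{\beta_1}\cdots k_r^{\beta_r}}.
$$
By Proposition 2, this is exactly the coefficient of $z^m$ in the expansion of $\zeta(\alpha_1,\ldots,\alpha_q+1;x-z)$ (this is where the duality between the $\bm\alpha$ and $\bm\beta$ indices enters, through the change of variables in the iterated integral). So the theorem reduces to showing that the coefficient of $z^m$ in the series expansion of $\zeta(\alpha_1,\ldots,\alpha_q+1;x-z)$ equals $\sum_{|\bm d|=m}M_q(\bm\alpha,\bm d)\,\zeta(\alpha_1+d_1,\ldots,\alpha_q+d_q+d_{q+1}+1;x)$.

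Next I would compute that coefficient directly. Starting from
$$
\zeta(\alpha_1,\ldots,\alpha_q+1;x-z)=\sum_{1\le n_1<\cdots<n_q}\prod_{j=1}^{q-1}(n_j+x-z)^{-\alpha_j}\cdot(n_q+x-z)^{-\alpha_q-1},
$$
I would expand each $(n_j+x-z)^{-\alpha_j}$ (for $1\le j\le q-1$) by the binomial series
$$
(n_j+x-z)^{-\alpha_j}=\sum_{d_j\ge 0}\binom{\alpha_j+d_j-1}{d_j}\frac{z^{d_j}}{(n_j+x)^{\alpha_j+d_j}},
$$
and for the last factor I would use the factorization trick
$$
(n_q+x-z)^{-\alpha_q-1}=(n_q+x-z)^{-\alpha_q}\cdot(n_q+x-z)^{-1},
$$
expanding each piece as a binomial series with indices $d_q$ and $d_{q+1}$ respectively (noting $\binom{1+d_{q+1}-1}{d_{q+1}}=1$, which is why $\bm d$ has $q+1$ components while $M_q(\bm\alpha,\bm d)$ only has $q$ binomial factors). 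Multiplying everything together and collecting by the total degree $|\bm d|=m$ gives exactly the asserted sum, since the term $z^m$ picks up precisely $\prod_{j=1}^q\binom{\alpha_j+d_j-1}{d_j}=M_q(\bm\alpha,\bm d)$ and produces the Hurwitz-type multiple zeta value with shifted exponents $\alpha_1+d_1,\ldots,\alpha_{q-1}+d_{q-1},\alpha_q+d_q+d_{q+1}+1$.

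Equating the two evaluations of the $z^m$-coefficient then yields the theorem. The convergence of all rearrangements is not a serious obstacle, since Proposition 2 was proved under $|z|<x+1$ and both expansions are absolutely convergent in that disk (so Fubini justifies the interchange of the sum over $(n_1,\ldots,n_q)$ with the sum over $\bm d$). The only real step requiring care is the bookkeeping in the last factor: one must not simply apply the binomial series $(n_q+x-z)^{-\alpha_q-1}=\sum_e\binom{\alpha_q+e}{e}z^e(n_q+x)^{-\alpha_q-e-1}$, since that would produce the binomial $\binom{\alpha_q+e}{e}$ instead of $\binom{\alpha_q+d_q-1}{d_q}$. Splitting the exponent as $\alpha_q+1=\alpha_q+1$ and using the two separate binomial expansions (equivalently, recognizing the hockey-stick identity $\sum_{d_q+d_{q+1}=e}\binom{\alpha_q+d_q-1}{d_q}=\binom{\alpha_q+e}{e}$) is what produces the prescribed form of the coefficient $M_q(\bm\alpha,\bm d)$ together with the double index $d_q,d_{q+1}$ in the final slot, and this is the main technical point of the argument.
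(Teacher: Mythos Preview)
Your argument is correct, but it reaches the result by a different mechanism than the paper does.  Both proofs identify $\azbs{m+1}$ with the coefficient of $z^m$ in the expansion of $\zeta(\alpha_1,\ldots,\alpha_q+1;x-z)$ (via Theorem~1 and Proposition~2, or equivalently Eq.~(\ref{eq.24})), and both then evaluate that coefficient.  The divergence is in how that evaluation is carried out.  The paper differentiates the iterated-integral representation $m$ times to introduce the factor $(\log(1/t_1))^m$, passes to the Drinfeld integral over the $(q{+}1)$-simplex, and applies the \emph{multinomial} expansion $(\log(1/t_1))^m=\bigl(\log(t_2/t_1)+\cdots+\log(1/t_{q+1})\bigr)^m$; the multinomial coefficients regroup into the binomials $\binom{\alpha_j+d_j-1}{d_j}$, and the $(q{+}1)$st component of $\bm d$ comes from the last summand $\log(1/t_{q+1})$.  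You instead stay with the Dirichlet series and expand each factor $(n_j+x-z)^{-\alpha_j}$ as a negative-binomial series; your deliberate splitting $(n_q+x-z)^{-\alpha_q-1}=(n_q+x-z)^{-\alpha_q}\cdot(n_q+x-z)^{-1}$ is what manufactures the extra index $d_{q+1}$ and keeps the coefficient $\binom{\alpha_q+d_q-1}{d_q}$ rather than $\binom{\alpha_q+e}{e}$.  Your route is more elementary in that it avoids both integral representations and the multinomial identity, while the paper's route keeps the geometric picture (and the origin of the extra index) visible.  Either way, the hockey-stick identity you mention is exactly the bridge to Corollary~1.
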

\begin{proof}
Multiply the factor $1/m!$ and differentiate both sides with respect 
to $z$ for $m$ times to Eq.(\ref{eq.24}), we obtain the following identity.
\begin{eqnarray*}
\lefteqn{\frac{1}{m!}\int_{E_{\varpi}}t_1^{\,x-z}
\left(\log\frac 1{t_1}\right)^m\Omega_1\Omega_2\cdots \Omega_{\varpi}}\\
&=&\sum_{1\leq k_1<k_2<\cdots<k_{r}}
B\left(1+x,k_r\right)\frac{
{\displaystyle\sum^\infty_{p=0}
{p+m\choose p}z^p}
P_{p+m}\left(H^{(1)}_{k_r}\left(x\right),\ldots,
H^{(p+m)}_{k_r}\left(x\right)\right)}
{k_1^{\beta_1}k_2^{\beta_2}\cdots
k_{r}^{\beta_{r}}}.
\end{eqnarray*}

We use another integral representation 
of $\zeta(\alpha_1,\ldots,\alpha_{q-1},\alpha_q+1)$
(ref. \cite[page 120--122]{Eie}):
$$
\int_{E_{q+1}}\left\{\prod^q_{j=1}\frac 1{(\alpha_j-1)!}
\left(\log\frac{t_{j+1}}{t_j}\right)^{\alpha_j-1}\frac{dt_j}{1-t_j}\right\}\frac{dt_{q+1}}{t_{q+1}},
$$
therefore the left-hand side of the above identity becomes
$$
\frac 1{m!}\int_{E_{q+1}}t_1^{x-z}\left(\log\frac 1{t_1}\right)^m
\int_{E_{q+1}}\left\{\prod^q_{j=1}\frac 1{(\alpha_j-1)!}
\left(\log\frac{t_{j+1}}{t_j}\right)^{\alpha_j-1}\frac{dt_j}{1-t_j}\right\}\frac{dt_{q+1}}{t_{q+1}}.
$$
Replace the factor
$$
\left(\log\frac 1{t_1}\right)^{m}
=\left(\log\frac{t_2}{t_1}+\log\frac{t_3}{t_2}+\cdots +\log
\frac 1{t_{q+1}}\right)^m
$$
and then substitute by its multinomial expansion, so that the integral becomes
$$
\sum_{|\bm{d}|=m}\int_{E_{q+1}}t_1^{x-z}
\left\{\prod_{j=1}^{q}\frac{1}{(\alpha_j-1)!d_j!}
\left(\log\frac{t_{j+1}}{t_j}\right)^{\alpha_j+d_j-1}\frac{dt_j}{1-t_j}\right\}
\frac 1{d_{q+1}!}\left(\log\frac 1{t_{q+1}}\right)^{d_{q+1}}\frac{dt_{q+1}}{t_{q+1}}.
$$
In terms of the summation form, it is
$$
\sum_{|\bm{d}|=m}\prod_{j=1}^{q}{{\alpha_j+d_j-1}\choose{d_j}}
\zeta(\alpha_1+d_1, \ldots, \alpha_{q-1}+d_{q-1},
 \alpha_{q}+d_{q}+d_{q+1}+1;x-z).
$$

Now we combine these results together, we obtain
\begin{eqnarray*}
\lefteqn{
\sum_{|\bm{d}|=m}
M_q(\bm\alpha,\bm d)
\zeta(\alpha_1+d_1, \ldots, \alpha_{q-1}+d_{q-1}, 
\alpha_{q}+d_{q}+d_{q+1}+1;x-z)
}\\
&=&\sum_{1\leq k_1<k_2<\cdots<k_{r}}
B\left(1+x,k_r\right)\frac{
{\displaystyle\sum^\infty_{p=0}
{p+m\choose p}z^p}
P_{p+m}\left(H^{(1)}_{k_r}\left(x\right),\ldots,
H^{(p+m)}_{k_r}\left(x\right)\right)}
{k_1^{\beta_1}k_2^{\beta_2}\cdots
k_{r}^{\beta_{r}}}.
\end{eqnarray*}
Setting $z=0$ in the above identity, the right-hand side of identity
will become
$$
\sum_{1\leq k_1<k_2<\cdots<k_r}
B\left(k_r,1+x\right)\frac{
P_m\left(H_{k_r}^{(1)}\left(x\right),
\ldots,H_{k_r}^{(m)}\left(x\right)\right)}
{k_1^{\beta_1}k_2^{\beta_2}\cdots k_r^{\beta_r}}.
$$
This is exactly $\azbs{m+1}$, hence we get our conclusion.
\end{proof}

The formula of $\azbs{m+1}$ can
be simplified to another form.

\begin{corollary}
Let $q$, $r$ be a pair of positive integers,
and $\zeta(\alpha_1, \alpha_2, \ldots, \alpha_{q}+1)$ 
be a multiple zeta value of depth $q$ and weight $\varpi=q+r$ 
with its dual $\zeta(\beta_1,\beta_2,\ldots,\beta_{r}+1)$
of depth $r$.
Then for $m\in\mathbb N_0$, $x+1>0$, 
and $\bm\beta=(\beta_1,\beta_2,\ldots,\beta_r)$, we have 
$$
\azbs{m+1}=
\sum_{|\bm{d}|=m}M_{q-1}(\bm\alpha,\bm d){\alpha_q+d_q\choose d_q}
\zeta(\alpha_1+d_1, \ldots, \alpha_{q-1}+d_{q-1}, 
\alpha_{q}+d_{q}+1;x),
$$
where 
$\bm d=(d_1,d_2,\ldots,d_q)$ is a $q$-tuple nonegative integers.
\end{corollary}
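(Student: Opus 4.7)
The plan is to derive the Corollary directly from the preceding Theorem by grouping together the two indices $d_q$ and $d_{q+1}$, which appear in the theorem only through their sum $d_q+d_{q+1}$ inside the zeta argument. This is a purely combinatorial reorganization, with no analytic input needed.

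More precisely, I would introduce new indices $e_j = d_j$ for $j=1,\ldots,q-1$ and $e_q = d_q + d_{q+1}$. The constraint $|\bm d|=m$ becomes $|\bm e|=m$ on the $q$-tuple $\bm e=(e_1,\ldots,e_q)$, and the multi-Hurwitz zeta value
$$\zeta(\alpha_1+d_1,\ldots,\alpha_{q-1}+d_{q-1},\alpha_q+d_q+d_{q+1}+1;x)$$
depends on $(d_1,\ldots,d_{q+1})$ only through $\bm e$, taking the form $\zeta(\alpha_1+e_1,\ldots,\alpha_{q-1}+e_{q-1},\alpha_q+e_q+1;x)$. Hence I can factor the sum in the Theorem as
$$\sum_{|\bm e|=m}\biggl(\prod_{j=1}^{q-1}\binom{\alpha_j+e_j-1}{e_j}\biggr)\biggl(\sum_{d_q=0}^{e_q}\binom{\alpha_q+d_q-1}{d_q}\biggr)\zeta(\alpha_1+e_1,\ldots,\alpha_q+e_q+1;x).$$

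The key step is then the classical ``hockey-stick'' identity
$$\sum_{d_q=0}^{e_q}\binom{\alpha_q+d_q-1}{d_q}=\binom{\alpha_q+e_q}{e_q},$$
which is a standard consequence of the Vandermonde convolution (or an immediate induction on $e_q$ using Pascal's rule). Substituting this and relabeling $\bm e$ back to $\bm d$ yields exactly the expression in the Corollary, with $M_{q-1}(\bm\alpha,\bm d)$ absorbing the first $q-1$ binomial factors and the additional factor $\binom{\alpha_q+d_q}{d_q}$ replacing the original $j=q$ factor of $M_q$.

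There is no substantive obstacle; the only point to be careful about is the bookkeeping of the index change, in particular making sure that the dimension of $\bm d$ really drops from $q+1$ to $q$ and that the coefficient $\binom{\alpha_q+d_q}{d_q}$ in the Corollary corresponds to $\binom{\alpha_q+e_q}{e_q}$ from the hockey-stick identity rather than to the original factor $\binom{\alpha_q+d_q-1}{d_q}$ in $M_q(\bm\alpha,\bm d)$. Once this re-indexing is recorded cleanly, the proof is a one-line algebraic manipulation.
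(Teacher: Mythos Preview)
Your proposal is correct and follows essentially the same route as the paper: the paper also performs the change of variables $a_i=d_i$ for $i<q$, $a_q=d_q+d_{q+1}$ (with $a_{q+1}=d_q$ as the inner running index), and then invokes the same hockey-stick identity $\sum_{a_{q+1}=0}^{a_q}\binom{\alpha_q+a_{q+1}-1}{a_{q+1}}=\binom{\alpha_q+a_q}{a_q}$. Your careful remark about distinguishing $\binom{\alpha_q+e_q}{e_q}$ from the original $M_q$-factor $\binom{\alpha_q+d_q-1}{d_q}$ is exactly the bookkeeping the paper implicitly relies on.
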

\begin{proof}
Changing the variables in the summation $(d_1,d_2,\ldots,d_q,d_{q+1})$
to $(a_1,a_2,\ldots,a_q,a_{q+1})$. with $a_i=d_i$ for $i=1,2,\ldots,q-1$,
$a_q=d_q+d_{q+1}$, $a_{q+1}=d_{q}$, and then using the identity
$$
\sum^{a_q}_{a_{q+1}=0}{\alpha_q+a_{q+1}-1\choose a_{q+1}}
={\alpha_q+a_q\choose a_q},
$$
the final formula will be gotten.
\end{proof}
Let $p=1$ in Theorem 1. Then we combine the result in Corollary 1, we have the following
identity.
\begin{theorem}
Let $q$, $r$ be a pair of positive integers,
and $\zeta(\alpha_1, \alpha_2, \ldots, \alpha_{q}+1)$ 
be a multiple zeta value of depth $q$ and weight $\varpi=q+r$ 
with its dual $\zeta(\beta_1,\beta_2,\ldots,\beta_{r}+1)$
of depth $r$.
Then for $m\in\mathbb N_0$, $x+1>0$, 
and $\bm\beta=(\beta_1,\beta_2,\ldots,\beta_r)$, we have 
\begin{eqnarray*}
\lefteqn{\sum_{1\leq n_1<n_2<\cdots<n_r}
B\left(n_r,1+x\right)\frac{
P_m\left(H_{n_r}^{(1)}\left(x\right),
\ldots,H_{n_r}^{(m)}\left(x\right)\right)}
{n_1^{\beta_1}n_2^{\beta_2}\cdots n_r^{\beta_r}}}\\
&=&
\sum_{|\bm{d}|=m}M_{q-1}(\bm\alpha,\bm d){\alpha_q+d_q\choose d_q}
\zeta(\alpha_1+d_1, \ldots, \alpha_{q-1}+d_{q-1}, 
\alpha_{q}+d_{q}+1;x),
\end{eqnarray*}
where 
$\bm d=(d_1,d_2,\ldots,d_q)$ is a $q$-tuple nonegative integers.
\end{theorem}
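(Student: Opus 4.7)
The plan is to evaluate the single quantity $\azbs{m+1}$ in two different ways using results already established, and then equate the two expressions. The hint just above the statement makes this transparent: Theorem 3 follows by combining the specialization $p=1$ of Theorem 1 with Corollary 1.

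First, I would specialize Theorem 1 by taking $p = 1$ and $\bm v = \bm\beta = (\beta_1,\ldots,\beta_r)$, which is the dual tuple to $\bm\alpha$. Since $p^{n_r} = 1$ in that case, the factor drops out of the denominator and the conclusion of Theorem 1 becomes exactly
$$
\azbs{m+1} = \sum_{1 \leq n_1 < n_2 < \cdots < n_r} B(n_r,1+x)\,\frac{P_m\!\left(H_{n_r}^{(1)}(x),\ldots,H_{n_r}^{(m)}(x)\right)}{n_1^{\beta_1} n_2^{\beta_2} \cdots n_r^{\beta_r}},
$$
which is precisely the left-hand side of the claimed identity. The hypothesis $p \geq 1$ is met with equality, and $x+1 > 0$ is exactly the remaining assumption of Theorem 3, so no additional conditions need to be verified.

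Second, I would apply Corollary 1 to the very same object $\azbs{m+1}$. Since Corollary 1 is stated precisely for $\bm\beta$ dual to $\bm\alpha$, it provides the alternative evaluation
$$
\azbs{m+1} = \sum_{|\bm d|=m} M_{q-1}(\bm\alpha,\bm d)\binom{\alpha_q + d_q}{d_q}\,\zeta(\alpha_1+d_1,\ldots,\alpha_{q-1}+d_{q-1},\alpha_q+d_q+1;\,x),
$$
with $\bm d = (d_1,\ldots,d_q)$. This is exactly the right-hand side of the claim. Equating the two expressions for $\azbs{m+1}$ immediately yields Theorem 3.

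There is no substantive obstacle, because the analytic and combinatorial work has already been done elsewhere: the integral representation (\ref{eq.22}) drives the proof of Theorem 1, while the multinomial expansion of $\left(\log(1/t_1)\right)^m$ and the Vandermonde-type identity $\sum_{a_{q+1}=0}^{a_q}\binom{\alpha_q+a_{q+1}-1}{a_{q+1}} = \binom{\alpha_q+a_q}{a_q}$ together with the reindexing $a_j = d_j$ for $j<q$, $a_q = d_q + d_{q+1}$, $a_{q+1} = d_q$ turns Theorem 2 into Corollary 1. Thus the only task is bookkeeping: record the two closed forms for $\azbs{m+1}$ and set them equal.
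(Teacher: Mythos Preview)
Your proposal is correct and matches the paper's own argument exactly: the paper states Theorem 3 immediately after the sentence ``Let $p=1$ in Theorem 1. Then we combine the result in Corollary 1, we have the following identity,'' which is precisely the two-step equating of expressions for $\azbs{m+1}$ that you describe.
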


\section{Multiple inverse binomial sums}
Note that we always use the following statements
throught out this paper :
Let $q$, $r$ be a pair of positive integers,
and $\zeta(\alpha_1, \alpha_2, \ldots, \alpha_{q}+1)$ 
be a multiple zeta value of depth $q$ and weight $\varpi=q+r$ 
with its dual $\zeta(\beta_1,\beta_2,\ldots,\beta_{r}+1)$
of depth $r$.

First we let $x=0$ in Theorem 3, we get a formula concerning Euler sums.
\begin{eqnarray*}
\lefteqn{\sum_{1\leq k_1<k_2<\cdots<k_r}
\frac{P_m(H_{k_r}^{(1)},\ldots,H_{k_r}^{(m)})}
{k_1^{\beta_1}k_2^{\beta_2}\cdots k_{r-1}^{\beta_{r-1}}k_r^{\beta_r+1}}} \\
&=&\sum_{|\bm d|=m}
M_{q-1}(\bm\alpha,\bm d)
{\alpha_q+d_q\choose d_q}
\zeta(\alpha_1+d_1,\ldots,\alpha_{q-1}+d_{q-1},\alpha_q+d_q+1).
\end{eqnarray*}
This identity first appears in \cite[Theorem A]{Chen}. 
Let $r=1$ in the above identity, we get an identity with the Arakawa-Kaneko zeta values
\cite[Eq.\,(4)]{CC}: for $m\geq 0$,
$$
\xi_q(m+1)=\sum^\infty_{n=1}\frac{P_m(H_n^{(1)},\ldots,H_n^{(m)})}{n^{q+1}}.
$$

We let $x=-1/2$ and $s=m+1$ with $m\in\mathbb N_0$. 
We could see that
$$
H_n^{(s)}\left(-\frac 12\right)=\sum^n_{k=1}\frac 1{(k-\frac 12)^s}
=\sum^n_{k=1}\frac{2^s}{(2k-1)^s}
=2^s\,O^{(s)}_n.
$$
And for any number $a$, 
$$
P_m(ax_1,a^2x_2,\ldots,a^mx_m)=
a^mP_m(x_1,x_2,\ldots,x_m).
$$
Therefore if we apply these values in Theorem 1, we have 
$$
{\mathcal Z}^{\bm v}_p(m+1;-1/2)
=\sum_{1\leq n_1<n_2<\cdots<n_k}
\frac{2^{m+2n_k}P_m(O_{n_k}^{(1)},\ldots,O_{n_k}^{(m)})}
{{2n_k\choose n_k}p^{n_k}
n_1^{v_1}n_2^{v_2}\cdots n_k^{v_k+1}},
$$
where ${\bm v}=(v_1,v_2,\ldots,v_k)\in\mathbb N^k$ and 
${\bm\beta}=(\beta_1,\beta_2,\ldots,\beta_r)\in\mathbb N^r$.
Since ${\mathcal Z}^v_1(m+1;-1/2)=2^{m+1}\alpha_v(m+1)$ and
${\mathcal Z}^v_2(m+1;-1/2)=2^{m+1}\beta_v(m+1)$, 
we can get Eq.(6) and Eq.(7) in \cite{CC1}:
\begin{eqnarray*}
2^{m+1}\alpha_v(m+1) &=& {\mathcal Z}^v_1(m+1;-1/2)
= \sum^\infty_{n=1}\frac{2^{m+2n}}{{2n\choose n}n^{v+1}}
P_m(O^{(1)}_n,\ldots,O^{(m)}_n),\\
2^{m+1}\beta_v(m+1) &=& {\mathcal Z}^v_2(m+1;-1/2)
= \sum^\infty_{n=1}\frac{2^{m+n}}{{2n\choose n}n^{v+1}}
P_m(O^{(1)}_n,\ldots,O^{(m)}_n).
\end{eqnarray*}

Applying Theorem 3 with $x=-1/2$,
then we will get a general formula involving $O_n^{(s)}$.
\begin{eqnarray}
\nonumber
\lefteqn{\sum_{1\leq k_1<k_2<\cdots<k_r}
\frac{2^{m+2k_r}P_m(O_{k_r}^{(1)},\ldots,O_{k_r}^{(m)})}
{{2k_r\choose k_r}
k_1^{\beta_1}k_2^{\beta_2}\cdots k_r^{\beta_r+1}}} \\
&=&\sum_{|\bm d|=m} \label{eq.51}
M_{q-1}(\bm\alpha,\bm d)
{\alpha_q+d_q\choose d_q}
\zeta(\alpha_1+d_1,\ldots,\alpha_{q-1}+d_{q-1},\alpha_q+d_q+1;-1/2).
\end{eqnarray}
In view of 
$$
t(\alpha_1,\ldots,\alpha_{q-1},\alpha_q+1)
=2^{-|\bm\alpha|-1}\zeta(\alpha_1,\ldots,\alpha_{q-1},\alpha_q+1;-1/2),
$$
we rewrite Eq.\,(\ref{eq.51}) as 
\begin{eqnarray}\nonumber
\lefteqn{\sum_{1\leq k_1<k_2<\cdots<k_r}
\frac{2^{2k_r}P_m(O_{k_r}^{(1)},\ldots,O_{k_r}^{(m)})}
{{2k_r\choose k_r}
k_1^{\beta_1}k_2^{\beta_2}\cdots k_r^{\beta_r+1}}} \\
&=&2^{|\bm\alpha|+1}\sum_{|\bm d|=m}\label{eq.53} 
M_{q-1}(\bm\alpha,\bm d)
{\alpha_q+d_q\choose d_q}
t(\alpha_1+d_1,\ldots,\alpha_{q-1}+d_{q-1},\alpha_q+d_q+1).
\end{eqnarray}

We only write two special cases for this identity as its applications. 
Firstly, we consider $q=1$.
Thus $\alpha=r$ and $\beta_1=\beta_2=\cdots=\beta_r=1$. 
The right-hand side of Eq.\,(\ref{eq.51}) becomes
\begin{eqnarray*}
{r+m\choose m}\zeta(r+m+1;-1/2)
&=& {r+m\choose m}\sum^\infty_{n=1}\frac 1{(n-\frac 12)^{r+m+1}} \\
&=& {r+m\choose m}(2^{r+m+1}-1)\zeta(r+m+1).
\end{eqnarray*}
Therefore we have the following identity.
\begin{corollary}
For $m\in\mathbb N_0$, $r\in\mathbb N$, we have
\begin{equation}\label{eq.52} 
\zeta(r+m+1)=\frac 1{{r+m\choose m}(2^{r+m+1}-1)}
\sum_{1\leq k_1<k_2<\cdots<k_r}
\frac{2^{m+2k_r}P_m(O_{k_r}^{(1)},\ldots,O_{k_r}^{(m)})}
{{2k_r\choose k_r}k_1\cdots k_{r-1}k_r^2}.
\end{equation}
\end{corollary}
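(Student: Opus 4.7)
The plan is to specialize the identity labeled Eq.\,(\ref{eq.51}) to the case $q=1$ and then express the Hurwitz-type zeta value $\zeta(r+m+1;-1/2)$ in terms of the ordinary Riemann zeta value $\zeta(r+m+1)$.

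First I would pin down what the duality assumption gives when $q=1$. Taking $\bm\alpha=(\alpha_1)$ with weight $\varpi=1+r$, the multiple zeta value under consideration is $\zeta(\alpha_1+1)$ with $\alpha_1=r$. Writing $(r+1)=(\{1\}^{0},(r-1)+2)$, the duality recipe in the introduction produces the dual $(\{1\}^{r-1},0+2)$, so $\bm\beta=(\beta_1,\ldots,\beta_r)=(1,1,\ldots,1)$ and $\beta_r+1=2$. This identifies the $\bm\beta$-indices on the left-hand side of Eq.\,(\ref{eq.51}) as $k_1,k_2,\ldots,k_{r-1},k_r^2$, matching the denominator in the statement.

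Next I would simplify the right-hand side of Eq.\,(\ref{eq.51}). Since $q=1$, the index vector $\bm d=(d_1)$ collapses to a single nonnegative integer, the condition $|\bm d|=m$ forces $d_1=m$, the product $M_{q-1}(\bm\alpha,\bm d)=M_0$ is the empty product equal to $1$, and the remaining binomial is $\binom{\alpha_q+d_q}{d_q}=\binom{r+m}{m}$. Hence the right-hand side reduces to
$$\binom{r+m}{m}\,\zeta(r+m+1;-1/2).$$

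The last step is to convert $\zeta(r+m+1;-1/2)$ to $\zeta(r+m+1)$. Directly from the definition,
$$\zeta(r+m+1;-1/2)=\sum_{n=1}^{\infty}\frac{1}{(n-\tfrac12)^{r+m+1}}=2^{r+m+1}\sum_{n=1}^{\infty}\frac{1}{(2n-1)^{r+m+1}},$$
and the classical identity $\sum_{n\geq 1}(2n-1)^{-s}=(1-2^{-s})\zeta(s)$ yields $\zeta(r+m+1;-1/2)=(2^{r+m+1}-1)\zeta(r+m+1)$. Substituting this into Eq.\,(\ref{eq.51}) and dividing by $\binom{r+m}{m}(2^{r+m+1}-1)$ gives the claimed formula.

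There is no real obstacle here; the only thing to double-check is the dualization giving $\bm\beta=(\{1\}^{r-1},1)$ (equivalently, all $\beta_i=1$), so that the left-hand side of Eq.\,(\ref{eq.51}) specializes exactly to the sum appearing in the corollary. Everything else is a direct substitution and a standard manipulation of the odd-index zeta series.
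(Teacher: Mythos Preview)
Your proof is correct and follows essentially the same route as the paper: specialize Eq.\,(\ref{eq.51}) to $q=1$ (so $\alpha_1=r$ and $\bm\beta=(\{1\}^r)$), collapse the right-hand side to $\binom{r+m}{m}\zeta(r+m+1;-1/2)$, and then use $\zeta(s;-1/2)=(2^s-1)\zeta(s)$. The only addition is that you spell out the duality computation giving $\bm\beta=(1,\ldots,1)$, which the paper simply asserts.
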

Let $r=1$ in this identity, it will give \cite[Eq.\,(8)]{CC1}.
That is to say, let $r=m=1$ in the above identity, we will regain
the formula for Ap\'{e}ry's constant (ref \cite[Eq.(9)]{CC1}):
$$
\sum^\infty_{n=1}\frac{2^{2n}}{{2n\choose n}}
\frac{O_n}{n^2}=7\zeta(3).
$$

Moreover, if we set $r=2$ in Eq.\,(\ref{eq.52}), then equation becomes
\begin{eqnarray*}
\zeta(m+3) &=& 
\frac 1{{m+2\choose m}(2^{m+3}-1)}
\sum_{1\leq k_1<k_2}
\frac{2^{m+2k_2}P_m(O_{k_2}^{(1)},\ldots,O_{k_2}^{(m)})}
{{2k_2\choose k_2}k_1 k_2^2} \\
&=& \frac{2^{m+1}}{(m+1)(m+2)(2^{m+3}-1)}
\sum_{k_2=2}^\infty\sum^{k_2-1}_{k_1=1}
\frac{2^{2k_2}P_m(O_{k_2}^{(1)},\ldots,O_{k_2}^{(m)})}
{{2k_2\choose k_2}k_1 k_2^2} \\
&=& \frac{2^{m+1}}{(m+1)(m+2)(2^{m+3}-1)}
\sum_{n=1}^\infty
\frac{2^{2n}H_{n-1}P_m(O_{n}^{(1)},\ldots,O_{n}^{(m)})}
{n^2{2n\choose n}}.
\end{eqnarray*}
We conclude as the following corollary.
\begin{corollary}
For $m\in\mathbb N_0$, we have
$$
\zeta(m+3) =
\frac{2^{m+1}}{(m+1)(m+2)(2^{m+3}-1)}
\sum_{n=1}^\infty
\frac{2^{2n}H_{n-1}P_m(O_{n}^{(1)},\ldots,O_{n}^{(m)})}
{n^2{2n\choose n}}.
$$
\end{corollary}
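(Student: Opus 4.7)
The plan is to derive this as a direct specialization of Corollary 1 (Equation \eqref{eq.52}) at $r=2$, since Corollary 1 is itself a consequence of Theorem 3 and already packages all the heavy lifting (the duality-based reindexing and the expansion of $\az$ via modified Bell polynomials).

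First I would substitute $r=2$ into Equation \eqref{eq.52}, giving
$$
\zeta(m+3) = \frac{1}{\binom{m+2}{m}(2^{m+3}-1)}
\sum_{1\le k_1<k_2} \frac{2^{m+2k_2}P_m(O_{k_2}^{(1)},\ldots,O_{k_2}^{(m)})}{\binom{2k_2}{k_2}\,k_1\,k_2^2}.
$$
Next I would simplify the binomial prefactor using $\binom{m+2}{m}=\frac{(m+1)(m+2)}{2}$, and pull the $2^m$ out of the numerator so that the remaining $2^{2k_2}$ stays attached to $\binom{2k_2}{k_2}$, matching the standard inverse central binomial form.

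Then I would handle the inner sum over $k_1$. Since the summand depends on $k_1$ only through $1/k_1$, the inner sum collapses to $\sum_{k_1=1}^{k_2-1}\frac{1}{k_1}=H_{k_2-1}$. Relabeling $k_2\to n$ (and noting that the $n=1$ term vanishes because $H_0=0$, so the series may be written from $n=1$ to $\infty$) produces exactly the stated identity.

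The main potential obstacle is merely bookkeeping: tracking the powers of $2$ correctly (one factor of $2$ from $\binom{m+2}{m}$, a factor $2^m$ pulled off $2^{m+2k_2}$, leaving $2^{m+1}$ out front and $2^{2n}$ in the summand) and confirming that the lower limit $n=1$ is legitimate. There is no real mathematical content beyond Corollary 1; the statement is a labeled computation and the proof is essentially the chain of equalities already displayed in the paragraph preceding the corollary.
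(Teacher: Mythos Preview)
Your proposal is correct and follows exactly the same approach as the paper: set $r=2$ in Eq.~\eqref{eq.52}, simplify $\binom{m+2}{m}=\tfrac{(m+1)(m+2)}{2}$, split off $2^m$ from $2^{m+2k_2}$, collapse the inner sum over $k_1$ to $H_{k_2-1}$, and relabel $k_2\to n$ (with the $n=1$ term vanishing). The only slip is a numbering one---Eq.~\eqref{eq.52} sits in Corollary~2, not Corollary~1---but the mathematics is identical to the chain of equalities the paper displays just before the statement.
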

We list some values of this formula:
\begin{enumerate}
\item Let $m=0$, then 
$$
7\zeta(3)=\sum^\infty_{n=1}\frac{2^{2n}}{{2n\choose n}}
\frac{H_{n-1}}{n^2}.
$$
This formula is different from the formula for 
Ap\'{e}ry's constant. But it can be found 
in \cite[Eq.(2.36)]{DK} with $u=4$ and $\theta=\pi$. 
\item Let $m=1$, then
$$
45\,\zeta(4)=\sum^\infty_{n=1}
\frac{2^{2n+1}}{{2n\choose n}}
\frac{H_{n-1}O_n}{n^2}.
$$
Since $O_n=H_{2n-1}-H_{n-1}/2$,
we can solve this formula from \cite[Eq.(2.38),Eq.(2.39)]{DK}
with $u=4$ and $\theta=\pi$.
\item Let $m=2$, then we have
$$
93\,\zeta(5)=\sum^\infty_{n=1}\frac{2^{2n}}{{2n\choose n}}
\frac{H_{n-1}(O_n^2+O_n^{(2)})}{n^2}.
$$
\end{enumerate}

Secondly, we set $r=1$. Thus $\beta=q$ and $\alpha_1=\alpha_2=\cdots=\alpha_q=1$.
Therefore Eq.\,(\ref{eq.53}) becomes the following style.
\begin{corollary}
For $m\in\mathbb N_0$ and $q\in\mathbb N$, we have
$$
\sum^\infty_{n=1}\frac{2^{2n}P_m(O^{(1)}_n,\ldots,O^{(m)}_n)}
{{2n\choose n}(2n)^{q+1}}
=\sum_{|\bm d|=m}(d_q+1)\,t(d_1+1,\ldots,d_{q-1}+1,d_q+2).
$$
\end{corollary}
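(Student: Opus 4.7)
The plan is to obtain the corollary by specializing Eq.~(\ref{eq.53}) to the case $r=1$. In this regime the dual multiple zeta value has depth one, so it must be $\zeta(\beta_1+1) = \zeta(q+1)$ with $\beta_1 = q$ (using $\varpi = q+r = q+1$). Dualizing back, the primal MZV is forced to be $\zeta(\{1\}^{q-1},2)$, which means $\alpha_1 = \alpha_2 = \cdots = \alpha_q = 1$ and $|\bm\alpha| = q$.

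Next I would evaluate each side of Eq.~(\ref{eq.53}) under this specialization. The outer summation on the left has only one index $k_1 = n$ with exponent $\beta_1+1 = q+1$, producing the single series
$$
\sum_{n=1}^{\infty}\frac{2^{2n}P_m(O_n^{(1)},\ldots,O_n^{(m)})}{\binom{2n}{n}\,n^{q+1}}.
$$
On the right, $M_{q-1}(\bm\alpha,\bm d) = \prod_{j=1}^{q-1}\binom{d_j}{d_j} = 1$ and $\binom{\alpha_q+d_q}{d_q} = d_q+1$, while the overall prefactor is $2^{|\bm\alpha|+1} = 2^{q+1}$. The arguments of $t$ simplify to $(d_1+1,\ldots,d_{q-1}+1,d_q+2)$, and the tuple $\bm d$ now has length $q$ with $|\bm d|=m$.

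The final step is purely cosmetic: transferring the factor $2^{q+1}$ from the right into the denominator on the left via $n^{q+1}\cdot 2^{q+1} = (2n)^{q+1}$ recovers exactly the stated identity. There is no serious obstacle here; the entire corollary is a direct specialization of the stronger identity Eq.~(\ref{eq.53}), with the only point requiring care being the unwinding of the duality to confirm that $r=1$ forces $\alpha_j=1$ for every $j$.
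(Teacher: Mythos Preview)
Your proposal is correct and follows exactly the approach the paper takes: the paper simply states ``we set $r=1$. Thus $\beta=q$ and $\alpha_1=\alpha_2=\cdots=\alpha_q=1$,'' and then records the corollary as the resulting specialization of Eq.~(\ref{eq.53}). Your write-up is more explicit than the paper's about the duality unwinding and the transfer of the factor $2^{q+1}$, but the argument is the same.
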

We list some values of this formula:
\begin{enumerate}
\item Let $m=0$, then
$$
t(\{1\}^{q-1},2)=\sum^\infty_{n=1}\frac{2^{2n}}{{2n\choose n}(2n)^{q+1}}.
$$
\item Let $m=1$, then we have
$$2\,t(\{1\}^{q-1},3)+\sum_{a+b=q-2}t(\{1\}^a,2,\{1\}^b,2)
=\sum^\infty_{n=1}\frac{2^{2n}O_n}{{2n\choose n}(2n)^{q+1}}.
$$
\end{enumerate}

\section{Integral Transform}
The Laplace transformation ${\cal L}(f)$ of $f$ is defined by
$$
{\cal L}(f)(u)=\int^\infty_0e^{-ut}f(t)\,dt,
$$
if the integral is convergent. Define the operators $D$ and $S$
as the following integral transforms:
\begin{eqnarray*}
D(f)(u) &=& \int^\infty_0\frac{(1-e^{-t})^u}{e^t-1}f(t)\,dt,\\
S(f)(u) &=& \int^\infty_0\frac{1-e^{-ut}}{e^t-1}f(t)\,dt.
\end{eqnarray*}
These two operators $D$ and $S$ were introduced in \cite{CaC, CC}.
We list a basic property of them which is usually called ``the Euler series transformation''.
\begin{lemma}\cite[Equation 4]{CC}
For any complex numbers $z$ such that $|z|<1/2$, we have
$$
\sum^\infty_{k=1}D(f)(k)\frac{z^k}k
= -\sum^\infty_{k=1}\frac1kS(f)(k)\left(\frac{z}{z-1}\right)^k.
$$
\end{lemma}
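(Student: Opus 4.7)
The plan is to prove the identity by expanding the power series in $z$ on each side inside the integrals, collapsing each series into a logarithm, and then verifying that the two resulting integrands coincide. I will first substitute the integral definitions of $D(f)$ and $S(f)$ into both sides and interchange summation with integration. On the left, this gives
$$
\sum_{k=1}^\infty D(f)(k)\frac{z^k}{k}
=\int_0^\infty\frac{f(t)}{e^t-1}\sum_{k=1}^\infty\frac{\bigl(z(1-e^{-t})\bigr)^k}{k}\,dt
=-\int_0^\infty\frac{f(t)}{e^t-1}\log\bigl(1-z(1-e^{-t})\bigr)\,dt,
$$
valid because for $|z|<1/2$ and $t\ge 0$ one has $|z(1-e^{-t})|<1/2$. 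On the right, writing $w=z/(z-1)$ (for which $|w|<1$ exactly when $|z|<1/2$) and using $\sum_{k\ge 1}w^k/k=-\log(1-w)$ together with $\sum_{k\ge 1}(we^{-t})^k/k=-\log(1-we^{-t})$, the same interchange yields
$$
-\sum_{k=1}^\infty\frac{S(f)(k)}{k}w^k
=-\int_0^\infty\frac{f(t)}{e^t-1}\log\!\left(\frac{1-we^{-t}}{1-w}\right)\,dt.
$$

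The second step is the purely algebraic verification that the two log-integrands agree. With $w=z/(z-1)$ one computes $1-w=1/(1-z)$ and $1-we^{-t}=\bigl((z-1)-ze^{-t}\bigr)/(z-1)$, so
$$
\frac{1-we^{-t}}{1-w}=(1-z)\cdot\frac{(z-1)-ze^{-t}}{z-1}=-\bigl((z-1)-ze^{-t}\bigr)=1-z(1-e^{-t}).
$$
Taking logarithms and comparing with the expression obtained from the left-hand side, the two integrals are term-for-term identical, which finishes the proof.

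The main obstacle is justifying the interchange of summation and integration; everything else is a one-line computation. For this I would assume (as is implicit in the setting of \cite{CaC,CC}, where $f$ arises as a Laplace-type kernel) that $f(t)/(e^t-1)$ is integrable and that $(1-e^{-t})/(e^t-1)\cdot|f(t)|$ decays fast enough at infinity. The bounds $|z(1-e^{-t})|<1/2$ and $|we^{-t}|<1$ then provide a geometric majorant uniform in $t$, so dominated convergence applies on both sides. This convergence argument is exactly the reason for the hypothesis $|z|<1/2$: it is the largest disk around $0$ on which both power series $\sum z^k$ and $\sum(z/(z-1))^k$ converge absolutely.
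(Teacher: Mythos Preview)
The paper does not prove this lemma at all; it simply quotes it from \cite[Equation 4]{CC} and uses it as a black box in the proof of Proposition 7. There is therefore nothing in the paper to compare your argument against.

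Your proof is correct and is in fact the standard derivation: substitute the integral definitions of $D$ and $S$, sum the logarithmic series inside the integral, and then verify the elementary identity
\[
\frac{1-we^{-t}}{1-w}=1-z(1-e^{-t}),\qquad w=\frac{z}{z-1}.
\]
One small slip: you write that $|w|<1$ ``exactly when'' $|z|<1/2$, but the condition $|z/(z-1)|<1$ is actually equivalent to $\Re(z)<1/2$ (a half-plane, not a disk). This does not harm the argument, since $|z|<1/2$ certainly implies $\Re(z)<1/2$, and your concluding remark that $|z|<1/2$ is the largest disk about $0$ contained in the common region of absolute convergence remains correct. With that wording fixed, the proof is complete.
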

It is interesting that these operators are connected with the evaluation
of ${\cal Z}^{\bm\alpha}_p(s;x)$.
\begin{proposition}
Let
$$
\lambda_{s,t}(x)=\frac{e^{-xt}t^{s-1}}{\Gamma(s)}.
$$
Then
\begin{equation}\label{eq.61}
{\cal Z}_p^{\bm\alpha}(s;x)
=\sum_{1\leq n_1<\cdots<n_r}\frac{D(\lambda_{s,x})(n_r)}
{n_1^{\alpha_1}\cdots n_r^{\alpha_r}p^{n_r}}.
\end{equation}
\end{proposition}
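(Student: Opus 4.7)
The plan is to unfold $\mathcal Z_p^{\bm\alpha}(s;x)$ by expanding the one-variable polylogarithm as its defining series, then to interchange the summation over the $n_i$ with the $t$-integration, and finally to recognize the inner integral that remains as exactly $D(\lambda_{s,x})(n_r)$.

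Concretely, I would substitute
$$
Li_{\bm\alpha}\!\left(\frac{1-e^{-t}}{p}\right)
=\sum_{1\le n_1<\cdots<n_r}\frac{1}{n_1^{\alpha_1}\cdots n_r^{\alpha_r}}\cdot\frac{(1-e^{-t})^{n_r}}{p^{n_r}}
$$
into the integral defining $\mathcal Z_p^{\bm\alpha}(s;x)$, move the sum outside the integral, and pull the $n$-dependent prefactor $1/(n_1^{\alpha_1}\cdots n_r^{\alpha_r}p^{n_r})$ out. What remains inside is
$$
\int_0^\infty \frac{(1-e^{-t})^{n_r}}{e^t-1}\cdot\frac{e^{-xt}t^{s-1}}{\Gamma(s)}\,dt
=\int_0^\infty \frac{(1-e^{-t})^{n_r}}{e^t-1}\,\lambda_{s,x}(t)\,dt
=D(\lambda_{s,x})(n_r),
$$
directly from the definitions of $\lambda_{s,x}$ and of the operator $D$. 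This immediately gives the stated identity, so the substantive content of the argument reduces to justifying the interchange of sum and integral.

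That interchange is the only nonroutine step, and will be the main obstacle. I would handle it by Tonelli's theorem: for real $s>1-r$, $x+1>0$, and $p\ge 1$, every summand in the expanded series is nonnegative on $(0,\infty)$, so it suffices to show the integral of the full sum is finite. But summing back inside simply reconstructs the absolute value of the integrand of $\mathcal Z_p^{\bm\alpha}(s;x)$, and Lemma 3 already provides the necessary control: the estimate $Li_{\bm\alpha}((1-e^{-t})/p)=O(t^r)$ near $t=0$ makes the integrand $O(t^{s+r-2})$ at the origin, which is integrable precisely for $\Re(s)>1-r$, while the polynomial growth at infinity is absorbed by the exponential decay of $e^{-(1+x)t}$. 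The identity for complex $s$ in the strip $\Re(s)>1-r$ then follows by analytic continuation, since both sides are holomorphic there.
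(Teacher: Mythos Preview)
Your proposal is correct and follows essentially the same approach as the paper: expand $Li_{\bm\alpha}$ as its defining series, interchange sum and integral, and identify the inner integral as $D(\lambda_{s,x})(n_r)$. In fact, you are more careful than the paper, which carries out the same four-line computation but does not pause to justify the interchange via Tonelli and the estimates of Lemma~3.
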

\begin{proof}
\begin{eqnarray*}{\cal Z}_p^{\bm\alpha}(s;x) &=&\frac1{\Gamma(s)}\int^\infty_0
\frac{e^{-xt}}{e^t-1}Li_{\bm\alpha}\left(\frac{1-e^{-t}}p\right)t^{s-1}\,dt \\
&=& \frac{1}{\Gamma(s)}\int^\infty_0\frac{e^{-xt}}{e^t-1}
\sum_{1\leq n_1<\cdots<n_r}\frac{(1-e^{-t})^{n_r}}{n_1^{\alpha_1}\cdots n_r^{\alpha_r}p^{n_r}}
t^{s-1}\,dt\\
&=&\sum_{1\leq n_1<\cdots<n_r}\frac1{n_1^{\alpha_1}\cdots n_r^{\alpha_r}p^{n_r}}
\int^\infty_0\frac{(1-e^{-t})^{n_r}}{e^t-1}\lambda_{s,x}(t)\,dt\\
&=& \sum_{1\leq n_1<\cdots<n_r}\frac{D(\lambda_{s,x})(n_r)}{n_1^{\alpha_1}
\cdots n_r^{\alpha_r}p^{n_r}}.
\end{eqnarray*}
\end{proof}
Therefore we are interested in evaluating the values
of $S(\lambda_{s,x})(n)$ and $D(\lambda_{s,x})(n)$ 
for a positive integer $n$.

The Laplace transformation of $\lambda_{st}(x)$ is
$$
{\cal L}(\lambda_{s,t})(y)=\frac1{\Gamma(s)}
\int^\infty_0e^{-(x+y)t}t^{s-1}\,dt
=\frac1{(x+y)^s}.
$$
Applying the operator $S$ and $D$ to $\lambda_{s,t}$ we have
\begin{eqnarray*}
S(\lambda_{s,t})(n) &=& \frac1{\Gamma(s)}\int^\infty_0
\frac{1-e^{-nt}}{1-e^{-t}}e^{-(1+x)t}t^{s-1}\,dt \\
&=& \sum^{n-1}_{k=0}\frac1{\Gamma(s)}\int^\infty_0
e^{-(k+1+x)t}t^{s-1}\,dt\\
&=&\sum^{n-1}_{k=0}\frac1{(k+1+x)^s}=H^{(s)}_n(x).
\end{eqnarray*}
And
\begin{eqnarray*}
D(\lambda_{s,x})(n) &=& \frac1{\Gamma(s)}
\int^\infty_0(1-e^{-t})^{n-1}e^{-(1+x)t}t^{s-1}\,dt \\
&=& \sum^{n-1}_{k=0}(-1)^k{n-1\choose k}\frac1{\Gamma(s)}
\int^\infty_0e^{-(k+1+x)t}t^{s-1}\,dt\\
&=& \sum^{n-1}_{k=0}(-1)^k{n-1\choose k}(k+1+x)^{-s}.
\end{eqnarray*}
We conclude the above results as follows.
\begin{proposition}
\begin{eqnarray*}
{\cal L}(\lambda_{s,x})(y) &=& (x+y)^{-s},\\
S(\lambda_{s,x})(n) &=& H_n^{(s)}(x),\\
D(\lambda_{s,x})(n) &=& \sum^{n-1}_{k=0}(-1)^k{n-1\choose k}(x+k+1)^{-s}.
\end{eqnarray*}
\end{proposition}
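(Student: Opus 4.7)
The plan is to verify each of the three identities by direct computation, reducing them all to the Laplace transform formula, which itself is a simple change of variable in the $\Gamma$-integral.

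First I would treat $\mathcal{L}(\lambda_{s,x})(y)$. Substituting the definition of $\lambda_{s,x}$ gives
$$\mathcal{L}(\lambda_{s,x})(y) = \frac{1}{\Gamma(s)}\int_0^\infty e^{-(x+y)t}\,t^{s-1}\,dt,$$
and the substitution $u=(x+y)t$ (valid for $\Re(x+y)>0$) turns this into $\Gamma(s)/[(x+y)^s\Gamma(s)] = (x+y)^{-s}$. With this formula in hand, the other two identities reduce to expanding a kernel as a finite exponential sum and exchanging the (finite) sum with the integral.

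Next I would handle $S(\lambda_{s,x})(n)$ by rewriting the kernel as
$$\frac{1-e^{-nt}}{e^t-1} = \frac{e^{-t}(1-e^{-nt})}{1-e^{-t}} = \sum_{k=0}^{n-1} e^{-(k+1)t}.$$
Integrating term by term against $\lambda_{s,x}(t)$ gives $\sum_{k=0}^{n-1}\mathcal{L}(\lambda_{s,x})(k+1) = \sum_{k=0}^{n-1}(x+k+1)^{-s}$, which by reindexing $j = k+1$ is $\sum_{j=1}^n (j+x)^{-s} = H_n^{(s)}(x)$. For $D(\lambda_{s,x})(n)$, I would use the identity $(1-e^{-t})^n/(e^t-1) = (1-e^{-t})^{n-1}e^{-t}$, so that
$$D(\lambda_{s,x})(n) = \frac{1}{\Gamma(s)}\int_0^\infty (1-e^{-t})^{n-1} e^{-(1+x)t} t^{s-1}\,dt,$$
and then expand $(1-e^{-t})^{n-1}$ by the binomial theorem. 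Each resulting term is again a Laplace transform of $\lambda_{s,x}$ at the point $k+1$, producing $\sum_{k=0}^{n-1}(-1)^k\binom{n-1}{k}(x+k+1)^{-s}$.

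There is essentially no obstacle here: all sums are finite, so term-by-term integration is automatic, and the only convergence requirement is $\Re(s)>0$ together with $x+1>0$ (already assumed throughout). The mildly delicate point is just bookkeeping with the factor $e^{-t}$ that converts $1/(e^t-1)$ into $e^{-t}/(1-e^{-t})$ so that the exponents line up as $k+1+x$ rather than $k+x$; once that is done, each of the three formulas falls out of the Laplace computation.
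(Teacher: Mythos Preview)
Your proposal is correct and follows essentially the same approach as the paper: compute the Laplace transform via the Gamma integral, then for $S$ and $D$ absorb the factor $e^{-t}$ from $1/(e^t-1)$, expand the resulting kernel as a finite sum of exponentials (geometric for $S$, binomial for $D$), and integrate term by term using the Laplace formula. The only cosmetic difference is that you make the substitution $u=(x+y)t$ explicit, whereas the paper just quotes the result of the Gamma integral.
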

In particular, if we set $s=m+1$, for $m$ is a nonnegative integer, we have
\begin{eqnarray*}
D(\lambda_{m+1,x})(n) &=& \int^\infty_0\frac{(1-e^{-t})^n}{e^t-1}\frac{e^{-xt}t^m}{m!}\,dt\\
&=& \int^\infty_0e^{-(x+1)t}(1-e^{-t})^{n-1}\frac{t^m}{m!}\,dt
\end{eqnarray*}
In view of Eq.\,(\ref{eq.22}), we found 
\begin{proposition}
$D(\lambda_{m+1,x})(n)=B(n,1+x)P_m(H_n^{(1)}(x),\ldots, H_n^{(m)}(x)$.
\end{proposition}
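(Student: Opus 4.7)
The proof is essentially a one-line comparison between two already established integral formulas, so the plan is to identify them and match them up.

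First, I would recall the integral representation of $D(\lambda_{m+1,x})(n)$ derived immediately before the statement: after expanding $(1-e^{-t})^n/(e^t-1) = (1-e^{-t})^{n-1}/(1 - e^{-t})\cdot(1-e^{-t})/(e^t-1)\cdot\ldots$, simplifying $(1-e^{-t})/(e^t-1) = e^{-t}$, and combining with $e^{-xt}$, one obtains
$$
D(\lambda_{m+1,x})(n) = \int_0^\infty e^{-(1+x)t}(1-e^{-t})^{n-1}\,\frac{t^m}{m!}\,dt.
$$
This is exactly the right-hand side of Eq.(\ref{eq.22}) in Proposition 2, up to the factor $1/B(n,1+x)$.

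Then I would invoke Proposition 2 directly: multiplying both sides of Eq.(\ref{eq.22}) by $B(n,1+x)$ yields
$$
B(n,1+x)\,P_m\bigl(H_n^{(1)}(x),\ldots,H_n^{(m)}(x)\bigr) = \int_0^\infty e^{-(1+x)y}(1-e^{-y})^{n-1}\,\frac{y^m}{m!}\,dy,
$$
and the right-hand side is precisely the integral representation of $D(\lambda_{m+1,x})(n)$ obtained in the previous step. Equating the two gives the claim.

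There is no substantive obstacle here; the only things to check are the routine algebraic manipulation $\frac{(1-e^{-t})^n}{e^t-1} = e^{-t}(1-e^{-t})^{n-1}$ used to convert the definition of $D$ applied to $\lambda_{m+1,x}$ into the Beta-type integral, and the fact that the hypotheses $n \in \mathbb{N}$ and $1+x > 0$ (assumed throughout) ensure absolute convergence so that Proposition 2 applies verbatim. Thus the proposition follows as a direct corollary of Proposition 2 combined with the computation already displayed just above the statement.
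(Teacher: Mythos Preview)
Your proposal is correct and follows exactly the same approach as the paper: the paper simply writes out the integral $D(\lambda_{m+1,x})(n)=\int_0^\infty e^{-(1+x)t}(1-e^{-t})^{n-1}\,\frac{t^m}{m!}\,dt$ and then states ``In view of Eq.\,(\ref{eq.22}), we found'' the proposition. Your account of the algebraic simplification $(1-e^{-t})^n/(e^t-1)=e^{-t}(1-e^{-t})^{n-1}$ and the direct comparison with Proposition~2 is precisely what the paper does, just spelled out in slightly more detail.
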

Applying this result in Eq.\,(\ref{eq.61}) we get the equation in Theorem 1 again.
In the following we give some applications for the special values
$r=1$ and ${\bm\alpha}=\alpha=1$ to ${\cal Z}^{\bm\alpha}_p(s;x)$. 
That is to say, we consider the different representations of 
${\cal Z}^1_p(s;x)$.
\begin{proposition}
Let $p\geq 2$, we have
\begin{equation}\label{eq.64} 
\sum^\infty_{n=1}\frac{D(\lambda_{s,x})(n)}{np^n}
={\cal Z}^1_p(s;x)=\sum^\infty_{n=1}\frac{(-1)^{n+1}H_n^{(s)}(x)}{n(p-1)^n}.
\end{equation}
In particular, if $s=m+1$, then
\begin{equation}\label{eq.62} 
\sum^\infty_{n=1}\frac{B(n,1+x)}{np^n}P_m(H_n^{(1)}(x),\ldots,H_n^{(m)}(x))
=\sum^\infty_{n=1}\frac{(-1)^{n+1}H_n^{(m+1)}(x)}{n(p-1)^n}.
\end{equation}
\end{proposition}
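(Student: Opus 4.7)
The plan is to prove the two equalities separately. The first equality, ${\cal Z}^1_p(s;x) = \sum_{n\geq 1} D(\lambda_{s,x})(n)/(np^n)$, is immediate from Eq.\,(\ref{eq.61}) in the preceding Proposition by specializing $r=1$ and $\bm\alpha=\alpha_1=1$. So the real content lies in the second equality.

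For the second equality I would invoke the Euler series transformation (the Lemma in Section 6) with $f=\lambda_{s,x}$ and $z=1/p$. The left-hand side becomes
$$
\sum_{k\geq 1}D(\lambda_{s,x})(k)\,\frac{(1/p)^k}{k}
=\sum_{k\geq 1}\frac{D(\lambda_{s,x})(k)}{kp^k}={\cal Z}^1_p(s;x),
$$
using the first equality just established. For the right-hand side, I would substitute $S(\lambda_{s,x})(k)=H_k^{(s)}(x)$ from Proposition 4, and simplify the geometric factor: with $z=1/p$,
$$
\frac{z}{z-1}=\frac{1/p}{1/p-1}=\frac{1}{1-p}=-\frac{1}{p-1}.
$$
Thus the right-hand side equals
$$
-\sum_{k\geq 1}\frac{H_k^{(s)}(x)}{k}\left(-\frac{1}{p-1}\right)^{k}
=\sum_{k\geq 1}\frac{(-1)^{k+1}H_k^{(s)}(x)}{k(p-1)^k},
$$
which is exactly the claimed second member of Eq.\,(\ref{eq.64}). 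The particular case Eq.\,(\ref{eq.62}) is then obtained by specializing $s=m+1$ and replacing $D(\lambda_{m+1,x})(n)$ by $B(n,1+x)P_m(H_n^{(1)}(x),\ldots,H_n^{(m)}(x))$ via Proposition 5.

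The main obstacle is that the Euler series transformation as stated requires $|z|<1/2$, i.e.\ $p>2$, whereas the proposition is claimed for all $p\geq 2$. For $p>2$ the above is a clean application. To handle the endpoint $p=2$, I would appeal to a continuity/Abel-type argument: both $\sum_{n\geq 1}D(\lambda_{s,x})(n)/(np^n)$ and $\sum_{n\geq 1}(-1)^{n+1}H_n^{(s)}(x)/(n(p-1)^n)$ can be shown to converge at $p=2$ (the latter by alternating-series/Dirichlet-type estimates, since $H_n^{(s)}(x)/n$ is well behaved and $(p-1)^{-n}=1$ at $p=2$), and both sides are continuous in $p$ on $(2,\infty)$ and up to $p=2$. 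Taking $p\to 2^{+}$ in the identity valid for $p>2$ extends it to the boundary. This limiting step is the only nontrivial point; the rest is a direct bookkeeping exercise using the lemmas and propositions already in the paper.
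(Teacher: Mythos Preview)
Your proposal is correct and follows essentially the same route as the paper: apply the Euler series transformation (Lemma 4) with $z=1/p$ to obtain Eq.\,(\ref{eq.64}) for $p>2$, then pass to the boundary $p=2$ via an Abel-type limit, and finally specialize to $s=m+1$ using Proposition 5. The paper's own proof is terser---it simply cites ``the classical Abel lemma'' together with the convergence of ${\cal Z}^1_2(s;x)$---but the structure and the ingredients are the same as yours.
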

\begin{proof}
Using Lemma 4 with $z=1/p$ we get Eq.\,(\ref{eq.64}) for $p>2$. 
Since 
$$
\sum^\infty_{n=1}\frac{D(\lambda_{s,x})(n)}{n2^n}
={\cal Z}^1_2(s;x)
$$
is convergent, then by the classical Abel lemma,
$$
\sum^\infty_{n=1}\frac{D(\lambda_{s,x})(n)}{n2^n}
=\sum^\infty_{n=1}\frac{(-1)^{n+1}H_n^{(s)}(x)}n.
$$
Therefore Eq.\,(\ref{eq.64}) is true for $p=2$.
\end{proof}
Let $x=0$ in Eq.\,(\ref{eq.62}), we have the following identity
$$
\sum^\infty_{n=1}\frac{P_m(H_n^{(1)},\ldots,H_n^{(m)})}
{n^2p^n}=\sum^\infty_{n=1}\frac{(-1)^{n+1}H_n^{(m+1)}}{n(p-1)^n}.
$$
\cite[Eq.\,(16)]{CC1} is the special case with $p=2$ in the above 
identity. In the following we list some identities with $x=-1/2$.
\begin{equation}\label{eq.63}
\sum^\infty_{n=1}\frac{2^{2n-1}}{n^2p^n{2n\choose n}}
P_m(O_n^{(1)},\ldots,O_n^{(m)})
=\sum^\infty_{n=1}\frac{(-1)^{n+1}O_n^{(m+1)}}{n(p-1)^n}.
\end{equation}
Let $\theta=2\arcsin(1/\sqrt{p})$. Then set $m=0$ in the above identitiy, 
we have
$$
2\sum^\infty_{n=1}\frac{(-1)^{n+1}O_n}{n(p-1)^n}
= \sum^\infty_{n=1}\frac{4^n}{n^2p^n{2n\choose n}}
= \frac{\theta^2}2.
$$
The right-most equation is derived by \cite[Eq.\,(C.2)]{DK} with
$u=4/p$. In the following we list some identities with special values of $p$:
$$\begin{array}{rclcrcl}
p&=&4&\Rightarrow&\displaystyle\sum^\infty_{n=1}\frac{(-1)^{n+1}O_n}{n3^n} 
&=& \displaystyle\frac{\pi^2}{36},\\
p&=&2&\Rightarrow&\displaystyle\sum^\infty_{n=1}\frac{(-1)^{n+1}O_n}{n} 
&=& \displaystyle\frac{\pi^2}{16},\\
p&=&8+4\sqrt{3}&\Rightarrow&\displaystyle\sum^\infty_{n=1}\frac{(-1)^{n+1}O_n}{n(7+4\sqrt{3})^n} 
&=& \displaystyle\frac{\pi^2}{144},\\
p&=&6+2\sqrt{5}&\Rightarrow&\displaystyle\sum^\infty_{n=1}\frac{(-1)^{n+1}O_n}{n(5+2\sqrt{5})^n} 
&=& \displaystyle\frac{\pi^2}{100},\\
p&=&\displaystyle\frac{2(5+\sqrt{5})}{5}&\Rightarrow
&\displaystyle\sum^\infty_{n=1}\frac{(-1)^{n+1}5^nO_n}{n(5+2\sqrt{5})^n} 
&=& \displaystyle\frac{\pi^2}{25}.
\end{array}$$
Let $m=1$ in Eq.\,(\ref{eq.63}), we have
\begin{eqnarray*}
2\sum^\infty_{n=1}\frac{(-1)^{n+1}O_n^{(2)}}{n(p-1)^n}
&=&\sum^\infty_{n=1}\frac{4^nO_n}{n^2p^n{2n\choose n}} \\
&=& -2\Cl_3(\theta)+2\Cl_3(\pi-\theta)-\theta\Cl_2(\pi-\theta)
-\theta\Cl_2(\theta)+\frac72\zeta(3),
\end{eqnarray*}
where $\Cl_n(\theta)$ is the Clausen's function and the last 
equation is derived by \cite[Eq.\,(2.36), (2.37)]{DK}
with $u=4/p$.
%
%
%

\end{document}